\newtheorem{thm}{Theorem}[section]
\newtheorem{lem}[thm]{Lemma}
\newtheorem{cor}[thm]{Corollary}
\newtheorem{rem}[thm]{Remark}
\numberwithin{equation}{section}
\title{Global existence and blow-up for the focusing inhomogeneous nonlinear Schr\"{o}dinger equation with inverse-square potential}
\author{{\bf JinMyong An, JinMyong Kim$^*$, RoeSong Jang}\\
\footnotesize{Faculty of Mathematics, {\bf Kim Il Sung} University, Pyongyang, Democratic People's Republic of Korea}\\
\footnotesize{$^*$ Corresponding Author: jm.kim0211@ryongnamsan.edu.kp.}
}
\date{}
\begin{document}
\maketitle
\begin{abstract}
In this paper, we study the Cauchy problem for the focusing inhomogeneous nonlinear Schr\"{o}dinger equation with inverse-square potential
\[iu_{t} +\Delta u-c|x|^{-2}u+|x|^{-b} |u|^{\sigma } u=0,\;
u(0)=u_{0} \in H_{c}^{1},\;(t,x)\in \mathbb R\times\mathbb R^{d},\]
where $d\ge3$, $0<b<2$, $\frac{4-2b}{d}<\sigma<\frac{4-2b}{d-2}$ and  $c>-c(d):=-\left(\frac{d-2}{2}\right)^{2}$.
We first establish the criteria for global existence and blow-up of general (not necessarily radial or finite variance) solutions to the equation. Using these criteria, we study the global existence and blow-up of solutions to the equation with general data lying below, at, and above the ground state threshold. Our results extend the global existence and blow-up results of Campos-Guzm\'{a}n (Z. Angew. Math. Phys., 2021) and Dinh-Keraani (SIAM J. Math. Anal., 2021).
\end{abstract}

\textit{2020 Mathematics Subject Classification.} 35Q55, 35A01, 35B44.

\textit{Key words and phrases.} Inhomogeneous nonlinear Schr\"{o}dinger equation, Inverse-square potential, Global existence, Blow-up, Ground state.
\section{Introduction}

In this paper, we consider the Cauchy problem for the focusing inhomogeneous nonlinear
Schr\"{o}dinger equation with inverse-square potential, denoted by INLS$_{c}$ equation,
\begin{equation} \label{GrindEQ__1_1_}
\left\{\begin{array}{l} {iu_{t}-P_{c}u+|x|^{-b}
|u|^{\sigma } u=0,\;(t,x)\in\mathbb R\times\mathbb R^{d},}
\\ {u\left(0,\; x\right)=u_{0}(x)\in H_{c}^{1}(\mathbb R^{d}),} \end{array}\right.
\end{equation}
where $d\ge3$, $u:\mathbb R\times \mathbb R^{d} \to \mathbb C$, $u_{0}:\mathbb R^{d} \to \mathbb C$, $0<b<2$, $\sigma >0$ and $P_{c}=-\Delta u+c|x|^{-2}$ with $c>-c(d):=-\left(\frac{d-2}{2}\right)^{2}$.

The INLS$_{c}$ equation appears in a variety of physical settings, for example, in nonlinear optical systems with spatially dependent interactions (see e.g. \cite{BPVT07} and the references therein). In particular, when $b = 0$, the equation \eqref{GrindEQ__1_1_} appears in various areas of physics, for instance in quantum field equations, or in the study of certain black hole solutions of the Einstein equations (see e.g. \cite{BPST03,KSWW75}). When $c = 0$, it appears in nonlinear optics, modeling inhomogeneities in the medium in which the wave propagates (see e.g. \cite{KMVBT17}).

The case $b=c=0$ is the classic nonlinear Schr\"{o}dinger (NLS) equation which has been been widely studied over the last three decades (see e.g. \cite{C03, LP15, WHHG11} and the references therein). The case $b=0$ and $c\neq0$ is known as the NLS equation with inverse-square potential, denoted by NLS$_{c}$ equation, which has also been extensively studied in recent years (see e.g. \cite{D18,KMVZZ17,KMVZ17,MM18, Y21} and the references therein). Moreover, when $c=0$ and $b\neq0$, we have the inhomogeneous nonlinear Schr\"{o}dinger equation, denoted by INLS equation, which has also attracted a lot of interest in recent years (see e.g. \cite{AK21,AKC21,AC21,C21,DK21} and the references therein).

The restriction on $c$ comes from the sharp Hardy inequality:
\begin{equation}\label{GrindEQ__1_2_}
c(d)\int_{\mathbb R^{d}}{|x|^{-2}\left|u(x)\right|^{2}dx}\le\int_{\mathbb R^{d}}{\left|\nabla u(x)\right|dx},~\forall u\in H^{1}(\mathbb R^{d}),
\end{equation}
which ensures that $P_{c}$ is a positive operator. Throughout the paper, for $s\in \mathbb R$ and $1<p<\infty $, we denote by $H^{s,p} (\mathbb R^{d} )$ and $\dot{H}^{s,p} (\mathbb R^{d} )$ the usual nonhomogeneous and homogeneous Sobolev spaces associated to the Laplacian $-\Delta$. As usual, we abbreviate $H^{s,2}(\mathbb R^{d})$ and $\dot{H}^{s,2}(\mathbb R^{d})$ as $H^{s}(\mathbb R^{d})$ and $\dot{H}^{s}(\mathbb R^{d})$, respectively.
Similarly, we define Sobolev spaces in terms of $P_{c}$ via
\begin{equation}\label{GrindEQ__1_3_}
\left\|f\right\|_{\dot{H}_{c}^{s,p}(\mathbb R^{d})}=\left\|(P_{c})^{\frac{s}{2}}f\right\|_{L^{p}(\mathbb R^{d})},~
\left\|f\right\|_{H_{c}^{s,p}(\mathbb R^{d})}=\left\|(1+P_{c})^{\frac{s}{2}}f\right\|_{L^{p}(\mathbb R^{d})}.
\end{equation}
We also abbreviate $\dot{H}_{c}^{s}(\mathbb R^{d})=\dot{H}_{c}^{s,2}(\mathbb R^{d})$ and $H_{c}^{s}(\mathbb R^{d})=H_{c}^{s,2}(\mathbb R^{d})$. If there is no confusion, $\mathbb R^{d}$ will be omitted in various function spaces. Note that by definition, we have
\begin{equation} \label{GrindEQ__1_4_}
\left\|f\right\|_{\dot{H}_{c}^{1}}=\int{|\nabla u(x)|^{2}+c|x|^{-2}|u(x)|^{2}dx.}
\end{equation}
By sharp Hardy inequality (1.2), we see that
\begin{equation} \nonumber
\left\|f\right\|_{\dot{H}_{c}^{1}}\sim \left\|f\right\|_{\dot{H}^{1}}~\textrm{for}~ c>-c(d).
\end{equation}
The INLS$_c$ equation \eqref{GrindEQ__1_1_} is invariant under the scaling,
$$
u_{\lambda}(t,x):=\lambda^{\frac{2-b}{\sigma}}u\left(\lambda^{2}t,\lambda x\right),~\lambda>0.
$$
An easy computation shows that
$$
\left\|u_{\lambda}(0)\right\|_{\dot{H}^{s}}=\lambda^{s-\frac{d}{2}+\frac{2-b}{\sigma}}\left\|u_0\right\|_{\dot{H}^{s}},
$$
which implies that the critical Sobolev index is given by
\begin{equation}\nonumber
s_{c}:=\frac{d}{2}-\frac{2-b}{\sigma}.
\end{equation}
Note that, if $s_{c}=0$ (alternatively $\sigma=\frac{4-2b}{d}$) the problem is known as the mass-critical or $L^{2}$-critical; if $s_{c}=1$ (alternatively $\sigma=\frac{4-2b}{d-2}$) it is called energy-critical or $\dot{H}^1$-critical. The problem is known as intercritical (mass-supercritical and energy-subcritical) if $0<s_{c}<1$ (alternatively $\frac{4-2b}{d}<\sigma<\frac{4-2b}{d-2}$).
On the other hand, solutions to the INLS$_{c}$ equation \eqref{GrindEQ__1_1_} conserve the mass and energy, defined respectively by
\begin{equation} \nonumber
M\left(u(t)\right):=\int_{\mathbb R^{d}}{\left|u(t, x)\right|^{2}dx},
\end{equation}
\begin{equation} \nonumber
E\left(u(t)\right):=\int_{\mathbb R^{d}}{\frac{1}{2}\left|\nabla u(t, x)\right|^{2}+\frac{c}{2}|x|^{-2}\left|u(t, x)\right|^{2}-\frac{1
}{\sigma+2} |x|^{-b} \left|u(t, x)\right|^{\sigma +2} dx}.
\end{equation}
For later uses, it is convenient to introduce the following exponents:
\begin{equation}\label{GrindEQ__1_5_}
\sigma_{c}:=\frac{1-s_{c}}{s_{c}}=\frac{4-2b-(d-2)\sigma}{d\sigma-4+2b},
\end{equation}
\begin{equation}\label{GrindEQ__1_6_}
H_{G}:=E(Q)[M(Q)]^{\sigma_{c}},~ K_{G}:=\left\|Q\right\|_{\dot{H}_{c}^{1}}\left\|Q\right\|_{L^{2}}^{\sigma_{c}},
\end{equation}
where $Q$ is a positive solution to the ground state equation:
\begin{equation}\label{GrindEQ__1_7_}
P_{c}Q+Q-|x|^{-b}|Q|^{\sigma}Q=0.
\end{equation}
Note that the existence of solution to the elliptic equation \eqref{GrindEQ__1_7_} was proved by \cite{CG21}. Moreover, it was proved that all possible solutions to \eqref{GrindEQ__1_7_} have the same mass, the same $\dot{H}^{1}_{c}$ norm and the same energy. See Lemma 2.1. For convenience, we also define the following quantities:
\begin{equation}\label{GrindEQ__1_8_}
M_{G}:=M(Q),~E_{G}:=E(Q),~S_{G}:=\left\|Q\right\|_{\dot{H}_{c}^{1}},~P_{G}:=\int{|x|^{-b}|Q|^{\sigma+2}dx},
\end{equation}
which are the same for all possible solutions to \eqref{GrindEQ__1_7_}.
The quantities $H_{G}$, $K_{G}$, $M_{G}$, $E_{G}$, $S_{G}$ and $P_{G}$ defined in \eqref{GrindEQ__1_6_} and \eqref{GrindEQ__1_8_} appear throughout the paper
and play a important role in our work.

Let us recall the known results for the INLS$_{c}$ equation \eqref{GrindEQ__1_1_}. Using the energy method, Suzuki \cite{S16} showed that if
\footnote[1]{\ Note that the author in \cite{S16} considered \eqref{GrindEQ__1_1_} with $c=-c(d)$. The authors in \cite{CG21} pointed out that the proof for the case $c>-c(d)$ is an immediate consequence of the previous one.}
$d\ge3$, $0<\sigma<\frac{4-2b}{d-2}$, $c>-c(d)$ and $0<b<2$, then the INLS$_{c}$ equation \eqref{GrindEQ__1_1_} is locally well-posed in $H^{1}_{c}$.
Recently, Campos-Guzm\'{a}n \cite{CG21} established the sufficient conditions for global existence and blow-up of $H^{1}_{c}$-solution to \eqref{GrindEQ__1_1_} with $d\ge 3$ and $\frac{4-2b}{d}\le\sigma<\frac{4-2b}{d-2}$.
They also studied the local well-posedness and small data global well-posedness under some assumption on $b$ and $c$ in the energy-subcritical case $\sigma<\frac{4-2b}{d-2}$ with $d\ge 3$, by using the standard Strichartz estimates combined with the fixed point argument.
Furthermore, they showed a scattering criterion and construct a wave operator in $H^{1}_{c}$, for the intercritical case.
Later, the authors in \cite{JAK21} establish the local well-posedness as well as small data global well-posedness and scattering in $H_{c}^{1}$ for the energy-critical INLS$_{c}$ equation \eqref{GrindEQ__1_1_} with $d\ge3$, $c>-\frac{(d+2-2b)^{2}-4}{(d+2-2b)^{2}}c(d)$, $0<b<\frac{4}{d}$ and $\sigma=\frac{4-2b}{d-2}$.
Furthermore, they establish the blow-up criteria for radial or finite variance data.

In this paper, we study the global existence and blow-up of $H_{c}^{1}$-solutions to the focusing INLS$_{c}$ equation \eqref{GrindEQ__1_1_} in the intercritical case $\frac{4-2b}{d}<\sigma<\frac{4-2b}{d-2}$.
As mentioned above, the global existence and blow-up for the focusing intercritical INLS$_{c}$ equation \eqref{GrindEQ__1_1_} were already studied by \cite{CG21}.
More precisely, they proved that the focusing intercritical INLS$_{c}$ equation \eqref{GrindEQ__1_1_} with $d\ge 3$, $0<b<2$  and $c>-c(d)$ is globally well-posed in $H_{c}^1$ if $u_{0}\in H_{c}^1$ satisfies
$$
E(u_{0})[M(u_{0})]^{\sigma_{c}}<H_{G},~
\left\|u_{0}\right\|_{\dot{H}_{c}^{1}}\left\|u_{0}\right\|_{L^{2}}^{\sigma_{c}}
<K_{G}.
$$
They also prove that if
$$
E(u_{0})[M(u_{0})]^{\sigma_{c}}<H_{G},~\left\|u_{0}\right\|_{\dot{H}_{c}^{1}}\left\|u_{0}\right\|_{L^{2}}^{\sigma_{c}}
>K_{G},
$$
and, in addition, if $|x|u_{0}\in L^{2}$ or $u_{0}$ is radial, then the solution blows up in finite time.

As we can see, the global existence and blow up for the focusing intercritical INLS$_{c}$ equation \eqref{GrindEQ__1_1_} were studied only for data below the ground state threshold. In particular, the blow-up result was shown only for radial or finite variance data.
Recently, Dinh-Keraani \cite{DK21} systematically studied the long time dynamics of solutions to the focusing intercritical INLS equation (i.e. \eqref{GrindEQ__1_1_} with $c=0$) with general (not necessarily radial or finite variance) data lying below, at, and above the ground state threshold.

Motivated by \cite{CG21, DK21}, this paper aims to study the global existence and blow-up for the focusing intercritical INLS$_{c}$ equation with general (not necessarily radial or finite variance) data lying below, at, and above the ground state threshold. Our results can be seen as the extension of the global existence and blow-up results of \cite{DK21} for \eqref{GrindEQ__1_1_} with $c=0$  to the focusing intercritical INLS$_{c}$ equation \eqref{GrindEQ__1_1_} with $c>-c(d)$. Our results also extend the ones of \cite{CG21} by treating the general data lying not only below the ground state threshold but also at, and above the ground state threshold.

To this end, we have the following criteria for global existence and blow-up of general solutions to \eqref{GrindEQ__1_1_}.

\begin{thm}\label{thm 1.1.}
Let $d\ge3$, $0<b<2$, $\frac{4-2b}{d}<\sigma<\frac{4-2b}{d-2}$ and $c>-c(d)$. Let $u$ be the solution to \eqref{GrindEQ__1_1_} defined on the maximal forward time interval of existence $[0,T^{*})$.
\begin{enumerate}
 \item \textnormal{(Global existence)} If
  \begin{equation}\label{GrindEQ__1_9_}
   \sup_{t\in [0,T^{*})}{P(u(t))[M(u)]^{\sigma_{c}}}<P_{G}[M_{G}]^{\sigma_{c}},
  \end{equation}
   then $T^{*}=\infty$, where
  \begin{equation}\label{GrindEQ__1_10_}
   P(f):=\int{|x|^{-b}|f(x)|^{\sigma+2}dx}.
  \end{equation}
 \item \textnormal{(Blow-up)} Assume that
  \begin{equation}\label{GrindEQ__1_11_}
   \sup_{t\in [0,T^{*})}{G(u(t))}\le -\delta
  \end{equation}
  for some $\delta>0$, where
  \begin{equation}\label{GrindEQ__1_12_}
   G(f):=\left\|f\right\|_{\dot{H}_{c}^{1}}^{2}-\frac{d\sigma+2b}{2(\sigma+2)}P(f).
  \end{equation}
  Then either $T^{*}<\infty$ or $T^{*}=\infty$ and there exists a time sequence $t_{n}\to \infty$ such that $\left\|u(t_{n})\right\|_{\dot{H}_{c}^{1}}\to \infty$ as $n\to \infty$. Moreover, if we assume in addition that $u$ has finite variance, i.e. $|x|u(t)\in L^{2}$ for all $t\in [0,T^{*})$, then $T^{*}<\infty$.
\end{enumerate}
\end{thm}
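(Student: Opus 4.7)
\textit{Part (1), Global existence.} This follows from energy conservation. Since $E(u(t))=E(u_{0})$, one has
$$\|u(t)\|_{\dot H^{1}_{c}}^{2}=2E(u_{0})+\tfrac{2}{\sigma+2}P(u(t)).$$
Combined with mass conservation $M(u(t))=M(u_{0})$, the hypothesis \eqref{GrindEQ__1_9_} gives a uniform upper bound on $P(u(t))$, hence on $\|u(t)\|_{H^{1}_{c}}$. The standard blow-up alternative from the $H^{1}_{c}$ local theory of Suzuki \cite{S16} (and \cite{CG21}) then forces $T^{*}=\infty$.

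\textit{Part (2), finite variance case.} Set $V(t):=\int |x|^{2}|u(t,x)|^{2}\,dx$. Because the inverse-square potential is homogeneous of degree $-2$, its linear Pohozaev contribution to the virial vanishes exactly, and a direct computation yields
$$V''(t)=8\|u(t)\|_{\dot H^{1}_{c}}^{2}-\tfrac{4(d\sigma+2b)}{\sigma+2}P(u(t))=8G(u(t))\le -8\delta.$$
Integrating twice gives $V(t)\le V(0)+V'(0)t-4\delta t^{2}$, which contradicts $V\ge 0$ for $t$ large, so $T^{*}<\infty$.

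\textit{Part (2), general data.} I would argue by contradiction: assume $T^{*}=\infty$ and $K:=\sup_{t\ge 0}\|u(t)\|_{\dot H^{1}_{c}}<\infty$, and run a localized virial. Fix a radial cutoff $\phi\in C^{\infty}(\mathbb{R}^{d})$ with $\phi(x)=|x|^{2}$ for $|x|\le 1$, $\phi$ constant for $|x|\ge 2$, and $\phi''(r)\le 2$, $\phi'(r)/r\le 2$ globally; set $\phi_{R}(x):=R^{2}\phi(x/R)$ and $V_{R}(t):=\int \phi_{R}|u(t,x)|^{2}\,dx$. A direct calculation yields
$$V_{R}''(t)=8G(u(t))+\mathcal{E}_{R}(u(t)),$$
where $\mathcal{E}_{R}$ splits into four pieces, all supported in $\{|x|\ge R\}$: (i) the kinetic mismatch $4\int[(\phi_{R}''-2)|\partial_{r}u|^{2}+(\phi_{R}'/r-2)(|\nabla u|^{2}-|\partial_{r}u|^{2})]$, which is non-positive by the concavity constraints on $\phi$ and the non-negativity of the angular gradient; (ii) the bi-Laplacian term $-\int\Delta^{2}\phi_{R}|u|^{2}$, bounded by $CR^{-2}M(u_{0})$; (iii) the inverse-square correction, bounded by $C|c|R^{-2}M(u_{0})$ via $|x|^{-2}\le R^{-2}$ on $\{|x|\ge R\}$; and (iv) the nonlinear correction, bounded by $CR^{-b}\|u(t)\|_{L^{\sigma+2}}^{\sigma+2}\le CR^{-b}K^{\sigma+2}$, using $|x|^{-b}\le R^{-b}$ and the Sobolev embedding $H^{1}\hookrightarrow L^{\sigma+2}$ valid throughout the intercritical range. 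Choosing $R$ large therefore makes $|\mathcal{E}_{R}(u(t))|<4\delta$ uniformly in $t$, so $V_{R}''(t)\le -4\delta$ for all $t\ge 0$, and two integrations contradict $V_{R}\ge 0$.

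\textit{Main obstacle.} Part (1) and the finite-variance case are essentially immediate consequences of the conservation laws and the exact virial identity. The real technical core is the general-data blow-up: the cutoff $\phi_{R}$ must be engineered so that the sign-indefinite kinetic errors become automatically non-positive (this is what fixes the choice $\phi''\le 2$ and $\phi'/r\le 2$), while the remaining pieces of $\mathcal{E}_{R}$ must decay uniformly in $t$ as $R\to\infty$. The uniformity is what fails for data with only an $L^{2}$ bound, and the decay of the nonlinear error crucially exploits the factor $|x|^{-b}\le R^{-b}$ provided by the spatial inhomogeneity of the nonlinearity — an ingredient unavailable for the classical power NLS$_{c}$ equation ($b=0$), and the reason why arbitrary (non-radial, infinite-variance) data can be handled in the present setting.
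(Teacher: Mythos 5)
Your proposal is correct and follows essentially the same route as the paper: Part (1) is the blow-up alternative combined with mass/energy conservation, the finite-variance case is the exact virial identity plus Glassey's convexity argument, and the general-data case is a contradiction argument using a localized virial with the same cutoff $R^{2}\theta(\cdot/R)$ and the same error decomposition (sign-favorable kinetic mismatch, $O(R^{-2})$ bi-Laplacian and inverse-square tails, $O(R^{-b})$ nonlinear tail via Sobolev embedding), which is exactly the paper's Lemma 2.7. The only cosmetic difference is that the paper folds the inverse-square correction into the kinetic estimate (2.15) rather than listing it as a separate piece.
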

A similar statement holds for negative times and we omit the details.

Using Theorem 1.1, we have the following criteria for global existence and blow-up below the ground state threshold.
\begin{thm}[Below the ground state threshold]\label{thm 1.2.}
Let $d\ge3$, $0<b<2$, $\frac{4-2b}{d}<\sigma<\frac{4-2b}{d-2}$ and $c>-c(d)$. Let $u_{0}\in H_{c}^{1}$ satisfy
\begin{equation}\label{GrindEQ__1_13_}
E(u_{0})[M(u_{0})]^{\sigma_{c}}<H_{G}.
\end{equation}
\begin{enumerate}
  \item If $u_{0}$ satisfies
  \begin{equation}\label{GrindEQ__1_14_}
  \left\|u_{0}\right\|_{\dot{H}_{c}^{1}}\left\|u_{0}\right\|_{L^{2}}^{\sigma_{c}}
  <K_{G},
  \end{equation}
  then the corresponding solution to \eqref{GrindEQ__1_1_} satisfies
  \begin{equation}\label{GrindEQ__1_15_}
   \sup_{t\in (-T_{*},\;T^{*})}{P(u(t))[M(u)]^{\sigma_{c}}}<P_{G}[M_{G}]^{\sigma_{c}}.
  \end{equation}
  In particular, the solution exists globally in time.
  \item If $u_{0}$ satisfies
  \begin{equation}\label{GrindEQ__1_16_}
  \left\|u_{0}\right\|_{\dot{H}_{c}^{1}}\left\|u_{0}\right\|_{L^{2}}^{\sigma_{c}}
  >K_{G},
  \end{equation}
  then the corresponding solution to \eqref{GrindEQ__1_1_} satisfies
  \begin{equation}\label{GrindEQ__1_17_}
   \sup_{t\in (-T_{*},\;T^{*})}{G(u(t))}\le -\delta,
  \end{equation}
  for some $\delta>0$. In particular, the solution either blows up in finite time, or there exists a time sequence $(t_{n})_{n\ge 1}$ satisfying $|t_{n}|\to \infty$ such that $\left\|u(t_{n})\right\|_{\dot{H}_{c}^{1}}\to \infty$ as $n\to \infty$. Moreover, if we assume in addition that
  \begin{itemize}
    \item $u_{0}$ has finite variance,
    \item or $u_{0}$ is radially symmetric,
    \item or $\sigma\le 2$, and $u_{0}\in \Sigma_{d}$, where
     \begin{equation}\label{GrindEQ__1_18_}
     \Sigma_{d}:=\left\{f\in H_{c}^{1}:\; f(y,x_{d})=f(|y|,x_{d}),\;x_{d}f\in L^{2}\right\}
     \end{equation}
     with $x=(y,x_{d})$, $y=(x_{1},\cdots,x_{d-1})\in \mathbb R^{d-1}$, and $x_{d}\in \mathbb R$,
  \end{itemize}
  then the corresponding solution blows up in finite time, i.e., $T_{*},\;T^{*}<\infty$.
\end{enumerate}
\end{thm}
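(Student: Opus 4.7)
The plan is to reduce both items to the criteria of Theorem \ref{thm 1.1.} via a sharp Gagliardo-Nirenberg trapping argument, and then handle the finite-time blow-up under the extra symmetry hypotheses of item (2) by truncated virial arguments. I would first invoke the sharp Gagliardo-Nirenberg-Hardy inequality
\[
P(f) \le C_{\mathrm{GN}}\,\|f\|_{\dot{H}^{1}_{c}}^{\frac{d\sigma+2b}{2}} \|f\|_{L^{2}}^{\sigma+2-\frac{d\sigma+2b}{2}},
\]
whose extremizers are the ground states of \eqref{GrindEQ__1_7_}, and record the induced Pohozaev identities $S_{G}^{2}=\frac{d\sigma+2b}{2(\sigma+2)}P_{G}$ and $\frac{d\sigma+2b}{2}H_{G}=\frac{d\sigma+2b-4}{4}K_{G}^{2}$. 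Setting $x(t):=\|u(t)\|_{\dot{H}^{1}_{c}}\|u(t)\|_{L^{2}}^{\sigma_{c}}$, mass and energy conservation combined with the sharp inequality yield
\[
E(u_{0})M(u_{0})^{\sigma_{c}} \ge g(x(t)),\qquad g(x):=\tfrac{1}{2}x^{2}-\tfrac{C_{\mathrm{GN}}}{\sigma+2}x^{(d\sigma+2b)/2}.
\]
The intercritical condition $\sigma>\tfrac{4-2b}{d}$ forces $(d\sigma+2b)/2>2$, so $g$ admits a unique strict maximum at $x^{*}=K_{G}$ with $g(K_{G})=H_{G}$.

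For item (1), the subthreshold assumption \eqref{GrindEQ__1_13_} and continuity of $x(\cdot)$ confine $x(t)$ to the connected component of the sublevel set $\{g\le E(u_{0})M(u_{0})^{\sigma_{c}}\}$ containing $x(0)<K_{G}$; this component is a closed interval $[0,a]$ with $a<K_{G}$, so $x(t)\le a$ uniformly in time. Substituting this bound into sharp Gagliardo-Nirenberg and evaluating the extremal constant on $Q$ gives
\[
\sup_{t}P(u(t))M(u)^{\sigma_{c}} \le C_{\mathrm{GN}}\,a^{(d\sigma+2b)/2} < C_{\mathrm{GN}}\,K_{G}^{(d\sigma+2b)/2} = P_{G}\,M_{G}^{\sigma_{c}},
\]
which is exactly \eqref{GrindEQ__1_15_}; global existence then follows from Theorem \ref{thm 1.1.}(1).

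For item (2), the same trapping under \eqref{GrindEQ__1_16_} instead puts $x(t)\ge b>K_{G}$ uniformly. Combining $E(u)=\tfrac{1}{2}\|u\|_{\dot H^{1}_{c}}^{2}-\tfrac{1}{\sigma+2}P(u)$ with the definition of $G$ produces the identity
\[
G(u(t))\,M(u_{0})^{\sigma_{c}} = \tfrac{d\sigma+2b}{2}E(u_{0})M(u_{0})^{\sigma_{c}} - \tfrac{d\sigma+2b-4}{4}x(t)^{2},
\]
and the Pohozaev identity at $Q$ then forces $G(u(t))\le -\delta$ uniformly, with $\delta>0$ of order $b^{2}-\gamma K_{G}^{2}$ and $\gamma:=E(u_{0})M(u_{0})^{\sigma_{c}}/H_{G}\in(0,1)$. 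This is precisely \eqref{GrindEQ__1_11_}, so Theorem \ref{thm 1.1.}(2) yields the announced blow-up/grow-up dichotomy, and, in particular, genuine finite-time blow-up whenever $u_{0}$ has finite variance.

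Under the radial or $\Sigma_{d}$ hypotheses the dichotomy must still be upgraded to finite-time blow-up. For this I would implement Ogawa-Tsutsumi-type truncated virial arguments: the radial version uses a smooth radial cutoff of $|x|^{2}/2$ and controls the tail of the nonlinear contribution via a radial Strauss-type decay estimate, while the cylindrical version uses a cutoff localised in the single direction $x_{d}$ together with a one-dimensional Sobolev embedding in $x_{d}$ coming from cylindrical symmetry in $y$. In both cases the coercivity $G(u(t))\le -\delta$ from the previous paragraph dominates the virial error and produces $V''\le -4\delta$, forcing $V$ to become negative in finite time, a contradiction. The main obstacle will be the cylindrical argument: the commutator of $P_{c}$ with the partial virial generator $x_{d}\partial_{x_{d}}$ does not vanish and must be absorbed, and the restriction $\sigma\le 2$ is precisely what allows this absorption by a one-dimensional Sobolev inequality, following the $c=0$ scheme of Dinh-Keraani; the radial case is softer because scale invariance of $P_{c}$ reduces its commutator with the radial virial generator to that of the free Laplacian.
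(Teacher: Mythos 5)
Your treatment of the dichotomy itself is correct and is essentially the paper's argument: the trapping of $x(t)=\|u(t)\|_{\dot H^1_c}\|u(t)\|_{L^2}^{\sigma_c}$ below (resp.\ above) $K_G$ via the sharp Gagliardo--Nirenberg inequality, the Pohozaev identities, and continuity is exactly what the paper imports from Lemma 3.5 of \cite{CG21}, and your identities $g(K_G)=H_G$, $C_{GN}K_G^{(d\sigma+2b)/2}=P_G M_G^{\sigma_c}$ and $G(u)M^{\sigma_c}=\tfrac{d\sigma+2b}{2}EM^{\sigma_c}-\tfrac{d\sigma+2b-4}{4}x(t)^2$ reproduce the paper's computation leading to \eqref{GrindEQ__1_15_} and \eqref{GrindEQ__1_17_}, after which Theorem \ref{thm 1.1.} applies; the finite-variance case is likewise the same (Glassey via Lemma \ref{lem 2.6.}).

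The gap is in the upgrade to finite-time blow-up under radial or cylindrical symmetry. You assert that ``the coercivity $G(u(t))\le-\delta$ from the previous paragraph dominates the virial error and produces $V''\le-4\delta$.'' It does not: in the truncated virial inequalities the error terms are not constants but involve the (possibly unbounded) kinetic energy --- in the radial case one gets, after Strauss decay and Young, a term of the form $\varepsilon\|u(t)\|_{\dot H^1_c}^2+C(\varepsilon,R)$ (cf.\ \eqref{GrindEQ__3_25_}), and in the cylindrical case a term $CR^{-\alpha}\|u(t)\|_{\dot H^1_c}^2$ (cf.\ \eqref{GrindEQ__3_13_}). Since in the blow-up regime $\|u(t)\|_{\dot H^1_c}$ has no a priori bound, a uniform bound $G(u(t))\le-\delta$ alone cannot close the argument. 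What is needed, and what the paper uses (its \eqref{GrindEQ__3_14_}, drawn from Lemma 3.6 of \cite{CG21}), is the strengthened coercivity $8G(u(t))+\varepsilon\|u(t)\|_{\dot H^1_c}^2\le-\eta$ for some $\varepsilon,\eta>0$ uniformly in $t$; this does follow from the same sub-threshold trapping you set up, but you neither state nor prove it, and without it the absorption step fails. A secondary inaccuracy: in the cylindrical case the role of $\sigma\le2$ is not to absorb a commutator of $P_c$ with $x_d\partial_{x_d}$ --- the paper's weight is $\phi_R(y,x_d)=\psi_R(y)+x_d^2$, untruncated in $x_d$, and the inverse-square terms are estimated directly by $O(R^{-2})$ (see \eqref{GrindEQ__3_11_}); the restriction $\sigma\le2$ enters when bounding the tail $\int_{|y|\ge R}|u|^{\sigma+2}$ by $\|u\|_{\dot H^1_c}^2$ (plus mass) with an $R$-decaying constant, as in (4.12) of \cite{DK21}.
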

\begin{rem}\label{rem 1.3.}
\textnormal{The authors in \cite{CG21} proved the finite time blow-up for radial or finite variance data below the ground state threshold. Theorem 1.2 establishes the blow-up criteria for general (not necessarily radial or finite variance) data. In particular, the finite time blow-up for cylindrically symmetric data is also proved.}
\end{rem}
We also have the following criteria for global existence and blow-up at the ground state threshold.
\begin{thm}[At the ground state threshold]\label{thm 1.4.}
Let $d\ge3$, $0<b<2$, $\frac{4-2b}{d}<\sigma<\frac{4-2b}{d-2}$ and $c>-c(d)$. Let $u_{0}\in H_{c}^{1}$ satisfy
\begin{equation}\label{GrindEQ__1_19_}
E(u_{0})[M(u_{0})]^{\sigma_{c}}=H_{G}.
\end{equation}
\begin{enumerate}
  \item If $u_{0}$ satisfies
  \begin{equation}\label{GrindEQ__1_20_}
  \left\|u_{0}\right\|_{\dot{H}_{c}^{1}}\left\|u_{0}\right\|_{L^{2}}^{\sigma_{c}}
  <K_{G},
  \end{equation}
  then the corresponding solution to \eqref{GrindEQ__1_1_} exists globally in time. Moreover, the solution either satisfies
  \begin{equation}\label{GrindEQ__1_21_}
   \sup_{t\in \mathbb R}{P(u(t))[M(u)]^{\sigma_{c}}}<P_{G}[M_{G}]^{\sigma_{c}}.
  \end{equation}
  or there exists a time sequence $(t_{n})_{n\ge1}$ satisfying $|t_{n}|\to\infty$ such that
  \begin{equation}\label{GrindEQ__1_22_}
  u(t_{n})\to e^{i\theta}Q~\textrm{strongly in}~H_{c}^{1}~\textrm{as}~n\to \infty,
  \end{equation}
  for some $\theta\in\mathbb R$ and some positive solution $Q$ to the ground state equation \eqref{GrindEQ__1_7_}.
  \item If $u_{0}$ satisfies
  \begin{equation}\label{GrindEQ__1_23_}
  \left\|u_{0}\right\|_{\dot{H}_{c}^{1}}\left\|u_{0}\right\|_{L^{2}}^{\sigma_{c}}
  =K_{G},
  \end{equation}
  then $u(t,x)=e^{it}e^{i\theta}Q(x)$ for some $\theta\in \mathbb R$ and some positive solution $Q$ to  \eqref{GrindEQ__1_7_}.
  \item If $u_{0}$ satisfies
  \begin{equation}\label{GrindEQ__1_24_}
  \left\|u_{0}\right\|_{\dot{H}_{c}^{1}}\left\|u_{0}\right\|_{L^{2}}^{\sigma_{c}}
  >K_{G},
  \end{equation}
  then the corresponding solution to \eqref{GrindEQ__1_1_}
  \begin{enumerate}
    \item either blows up forward in time, i.e., $T^{*}<\infty$
    \item or there exists a time sequence $t_{n}\to\infty$ such that $\left\| u(t_{n})\right\|_{\dot{H}_{c}^{1}}\to \infty$ as $n\to\infty$,
    \item or there exists a time sequence $t_{n}\to\infty$ such that \eqref{GrindEQ__1_22_} holds.
  \end{enumerate}
    Moreover, if we assume in addition that
    \begin{itemize}
    \item $u_{0}$ has finite variance,
    \item or $u_{0}$ is radially symmetric,
    \item or $\sigma\le 2$, and $u_{0}\in \Sigma_{d}$,
    \end{itemize}
    then the possibility in Item (b) can be excluded.
\end{enumerate}
\end{thm}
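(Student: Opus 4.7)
The proof rests on three tools that come together neatly: conservation of mass and energy, the sharp Gagliardo--Nirenberg inequality in $H^{1}_{c}$ identifying $H_{G}$ and $K_{G}$ as threshold values, and the Pohozaev identity $G(Q)=0$ for the ground state. Setting $K(t):=\|u(t)\|_{\dot{H}^{1}_{c}}\|u(t)\|_{L^{2}}^{\sigma_{c}}$, one eliminates $P$ from the definitions of $E$ and $G$ in \eqref{GrindEQ__1_12_}, and then uses $M$-conservation together with the threshold identity $E(u_{0})M(u_{0})^{\sigma_{c}}=H_{G}$ and the Pohozaev identity $\frac{d\sigma+2b}{2}H_{G}=\frac{d\sigma+2b-4}{4}K_{G}^{2}$ for $Q$, to obtain the clean relation
\begin{equation*}
G(u(t))\,M(u_{0})^{\sigma_{c}}=\frac{d\sigma+2b-4}{4}\bigl(K_{G}^{2}-K(t)^{2}\bigr).
\end{equation*}
Since $d\sigma+2b>4$ in the intercritical range, the sign of $G(u(t))$ is exactly controlled by the position of $K(t)$ relative to $K_{G}$, which is the bridge to the dichotomy of Theorem~\ref{thm 1.1.}.

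For Part (1), $K(0)<K_{G}$; a continuity/uniqueness argument forbids $K(t)$ from ever touching $K_{G}$, because $K(t_{1})=K_{G}$ combined with $E(u)M(u)^{\sigma_{c}}=H_{G}$ would saturate the sharp Gagliardo--Nirenberg inequality at $t_{1}$ and force $u(t_{1})=e^{i\theta}Q$; backward-in-time uniqueness of the $H^{1}_{c}$ Cauchy theory would then contradict $K(0)<K_{G}$. Hence $K(t)<K_{G}$ throughout the maximal interval, yielding a uniform $H^{1}_{c}$ bound and thus global existence. If $\sup_{t}K(t)<K_{G}$ strictly, inserting this bound into the sharp Gagliardo--Nirenberg inequality yields \eqref{GrindEQ__1_21_} directly. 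Otherwise there is $t_{n}$ with $K(t_{n})\to K_{G}$; then $\{u(t_{n})\}$ is a bounded minimizing sequence for the sharp Gagliardo--Nirenberg quotient at fixed mass, and an $H^{1}_{c}$ profile decomposition adapted to $P_{c}$ yields strong convergence $u(t_{n})\to e^{i\theta}Q$ along a subsequence, with $|t_{n}|\to\infty$ by the same uniqueness argument. Part (2) then follows at once: the hypothesis saturates the sharp Gagliardo--Nirenberg inequality at $t=0$, so $u_{0}=e^{i\theta}Q$ for some ground state $Q$, and $e^{it}e^{i\theta}Q(x)$ solves \eqref{GrindEQ__1_1_} by virtue of \eqref{GrindEQ__1_7_}; uniqueness of the Cauchy problem completes the argument.

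Part (3) reverses the roles: $K(0)>K_{G}$ forces $K(t)>K_{G}$ on the existence interval and hence $G(u(t))<0$. If $\inf_{t}K(t)>K_{G}$ strictly, then $G(u(t))\le -\delta$ and Theorem~\ref{thm 1.1.} yields option (a) (finite-time blow-up) or option (b) (an unbounded $\dot{H}^{1}_{c}$ subsequence). Otherwise $K(t_{n})\to K_{G}$ along some $t_{n}$, and the profile-decomposition argument of Part (1) forces $u(t_{n})\to e^{i\theta}Q$ strongly in $H^{1}_{c}$, giving option (c) with $t_{n}\to\infty$. Under the additional hypotheses of finite variance, radial symmetry, or cylindrical symmetry with $\sigma\le 2$, option (c) is ruled out by the corresponding virial identity---classical for finite variance, Ogawa--Tsutsumi localized for radial data, Martel-type tilted (whence $\sigma\le 2$) for cylindrical data---arguments that, following Dinh--Keraani, convert the pointwise strict negativity $G(u(t))<0$ into a finite-time blow-up even though $G(u(t_{n}))\to 0$ along a sequence, leaving only alternatives (a) and (b).

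The main obstacle is the compactness step ``$K(t_{n})\to K_{G}$ implies strong $H^{1}_{c}$-convergence $u(t_{n})\to e^{i\theta}Q$'' that drives both Parts (1) and (3). A linear profile decomposition for $e^{-itP_{c}}$ must accommodate two competing features: the inverse-square potential $c|x|^{-2}$ is scale-invariant but destroys translation invariance (profiles translated to infinity satisfy the potential-free linear equation), while the inhomogeneity $|x|^{-b}$ privileges the origin. The key is to rule out nontrivial escape-to-infinity profiles, showing via a Pythagorean/subadditivity decomposition of mass, energy and $\dot{H}^{1}_{c}$-norm that any such profile would strictly lower $E(u_{0})M(u_{0})^{\sigma_{c}}$ below $H_{G}$, contradicting the threshold assumption. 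The exclusion of option (c) under cylindrical symmetry is the second delicate point, since the tilted virial weight in the $x_{d}$-direction must be reconciled with the spherically symmetric singularity of $c|x|^{-2}$, and it is exactly this reconciliation that forces the restriction $\sigma\le 2$.
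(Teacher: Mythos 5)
Your overall strategy for the trichotomy is essentially the paper's: the scale-invariant identity $G(u(t))[M(u_0)]^{\sigma_c}=\tfrac{d\sigma+2b-4}{4}\bigl([K_G]^2-[K(t)]^2\bigr)$ at threshold, the continuity-plus-uniqueness barrier (touching $K_G$ saturates the sharp Gagliardo--Nirenberg inequality, forces a soliton, contradicts the data), the dichotomy ``strict bound vs.\ a sequence $K(t_n)\to K_G$'', and a compactness lemma sending $u(t_n)$ to $e^{i\theta}Q$. One structural difference: for that compactness step you invoke a linear profile decomposition for $e^{-itP_c}$ and call it the main obstacle, whereas the paper gets it almost for free from the compact embedding $H^1_c\hookrightarrow L^{\sigma+2}(|x|^{-b}dx)$ (Lemma 3.1, taken from Campos--Guzm\'an): a bounded sequence with fixed mass, threshold energy and $\dot H^1_c$-norm tending to $S_G$ has $P(f_n)\to P(f)$ along a subsequence, and weak lower semicontinuity plus sharpness of Gagliardo--Nirenberg upgrades this to strong $H^1_c$ convergence (Lemma 3.2). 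Your route could probably be made to work (runaway profiles carry no $P$ because $|x|^{-b}$ vanishes at infinity), but it is left as a sketch and is heavier than needed.

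The genuine error is in the last part of Item 3. You claim the extra hypotheses (finite variance, radial, or cylindrical with $\sigma\le 2$) exclude alternative (c), ``leaving only (a) and (b)''. The theorem asserts the opposite: they exclude (b), leaving (a) and (c). The mechanism you propose cannot work: all the virial arguments (classical Glassey, the radial localized estimate \eqref{GrindEQ__3_25_}, the cylindrical estimate \eqref{GrindEQ__3_13_}) produce finite-time blow-up only from a \emph{uniform} bound of the form \eqref{GrindEQ__3_24_}, $8G(u(t))+\varepsilon\|u(t)\|_{\dot H^1_c}^2\le-\eta$, which is available precisely in the case $\inf_t K(t)>K_G$; in the complementary case one has $G(u(t_n))\to 0$, pointwise negativity of $G$ gives no convexity conclusion, and scenario (c) cannot be ruled out (threshold solutions converging to the soliton, of Duyckaerts--Merle/Duyckaerts--Roudenko type, are exactly this scenario and can be radial with finite variance). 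The correct argument, as in the paper, runs the other way: in the uniform case, the coercive bound \eqref{GrindEQ__3_24_} together with the localized virial estimates forces $T^*<\infty$, so the sub-case ``$T^*=\infty$ with $\|u(t_n)\|_{\dot H^1_c}\to\infty$'' is impossible; in the non-uniform case one lands in \eqref{GrindEQ__1_22_}. Hence (b) is excluded and (a), (c) remain. As written, your final step proves a false statement and does not prove the theorem's claim; this part needs to be redone along the lines just described.
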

Finally, we study the global existence and blow-up above the ground state threshold.
\begin{thm}[Above the ground state threshold]\label{thm 1.5.}
Let $d\ge 3$, $0<b<2$, $\frac{4-2b}{d}<\sigma<\frac{4-2b}{d-2}$, $c>-c(d)$ and
\begin{equation}\label{GrindEQ__1_25_}
V(t):=\int{|x|^{2}\left|u(t,x)\right|^{2}dx}.
\end{equation}
Let $u_{0}\in H_{c}^{1}$ be such that $|x|u_{0}\in L^2$. Assume that
\begin{equation}\label{GrindEQ__1_26_}
E(u_{0})[M(u_{0})]^{\sigma_{c}}\ge H_{G},
\end{equation}
\begin{equation}\label{GrindEQ__1_27_}
\frac{E(u_{0})[M(u_{0})]^{\sigma_{c}}}{E_{G}[M_{G}]^{\sigma_{c}}}
\left[1-\frac{(V'(0))^{2}}{32E(u_{0})V(0)}\right]\le1.
\end{equation}
\begin{enumerate}
  \item If
  \begin{equation}\label{GrindEQ__1_28_}
  P(u_{0})[M(u_{0})]^{\sigma_{c}}<P_{G}[M_{G}]^{\sigma_{c}},
  \end{equation}
  \begin{equation}\label{GrindEQ__1_29_}
  V'(0)\ge0,
  \end{equation}
  then the corresponding solution to \eqref{GrindEQ__1_1_} satisfies \eqref{GrindEQ__1_9_}. In particular, the solution exists globally in time.
  \item If
  \begin{equation}\label{GrindEQ__1_30_}
  P(u_{0})[M(u_{0})]^{\sigma_{c}}>P_{G}[M_{G}]^{\sigma_{c}},
  \end{equation}
  \begin{equation}\label{GrindEQ__1_31_}
  V'(0)\le0,
  \end{equation}
  then the corresponding solution to \eqref{GrindEQ__1_1_} blows up forward in finite time, i.e., $T^{*}<\infty$.
\end{enumerate}
\end{thm}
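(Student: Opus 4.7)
Both conclusions are derived from Theorem \ref{thm 1.1.} via analysis of the variance $V(t) = \int |x|^{2}|u(t,x)|^{2}dx$. The starting point is the virial identity $V''(t)=8\,G(u(t))$, which, after using energy conservation and the Pohozaev relation $G(Q)=0$ (giving $E_{G} = \frac{d\sigma+2b-4}{4(\sigma+2)}P_{G}$), rewrites as
\[V''(t) = 16\,E(u_{0}) - \frac{4(d\sigma+2b-4)}{\sigma+2}\,P(u(t)).\]
Introducing the threshold
\[\eta := 16\Bigl[E(u_{0}) - E_{G}\bigl(M_{G}/M(u_{0})\bigr)^{\sigma_{c}}\Bigr],\]
which is non-negative by (1.26), the comparison $P(u(t))[M(u_{0})]^{\sigma_{c}} \lessgtr P_{G}M_{G}^{\sigma_{c}}$ is precisely $V''(t)\gtrless\eta$. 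An elementary rearrangement shows that (1.27) is equivalent to $W(0)\geq 0$ with $W(t):=(V'(t))^{2}-2\eta V(t)$, and direct differentiation yields $W'(t) = 2V'(t)(V''(t)-\eta)$. The key mechanism in both parts is the sign-propagation of $V'$ and $V''-\eta$ enforced by the pair (1.28)-(1.29) (respectively (1.30)-(1.31)) together with the $W$-monotonicity.

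\textbf{Part 1 (Global existence).} Let $T_{0}$ be the supremum of $t\in[0,T^{\ast})$ on which $P(u(s))[M(u_{0})]^{\sigma_{c}} < P_{G}M_{G}^{\sigma_{c}}$ for every $s\in[0,t]$; by (1.28), $T_{0}>0$. On $[0,T_{0})$ the strict inequality $V''(t)>\eta\geq 0$ combined with (1.29) gives $V'(t)\geq V'(0)+\eta t\geq 0$, whence $W'(t)\geq 0$ and $W(t)\geq W(0)\geq 0$. Coupling the Cauchy--Schwarz estimate $(V'(t))^{2}\leq 16V(t)\|u(t)\|_{\dot{H}_{c}^{1}}^{2}$ (valid for $c\geq 0$; for $c\in(-c(d),0)$ the norm equivalence $\|\cdot\|_{\dot{H}^{1}}\sim\|\cdot\|_{\dot{H}_{c}^{1}}$ provided by the sharp Hardy inequality gives the analogous bound with an adjusted constant absorbable into $\eta$) with the identity $\|u(t)\|_{\dot{H}_{c}^{1}}^{2}=\frac{4(d\sigma+2b)E(u_{0})-V''(t)}{2(d\sigma+2b-4)}$ and the bound $V''\geq\eta$ furnishes a uniform $H_{c}^{1}$-bound on $u(t)$ throughout $[0,T_{0})$, so the local well-posedness extends the solution. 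The strict supremum required by (1.9) is obtained by observing that the boundary case $P(u(T_{0}))[M(u_{0})]^{\sigma_{c}}=P_{G}M_{G}^{\sigma_{c}}$ forces $V''(T_{0})=\eta$; combining this with $W(T_{0})\geq 0$ and the sharp Gagliardo--Nirenberg inequality identifies $u(T_{0})$ with a ground state, which contradicts the non-triviality implicit in (1.26)--(1.28). Hence $T_{0}=T^{\ast}$ and (1.9) holds, so Theorem \ref{thm 1.1.}(1) yields $T^{\ast}=\infty$.

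\textbf{Part 2 (Blow-up) and main obstacle.} From (1.27), $(V'(0))^{2}\geq 2\eta V(0)$; combined with (1.31) this forces $V'(0)\leq -\sqrt{2\eta V(0)}<0$ in the non-trivial regime $\eta>0$. A short bootstrap shows that $V'(t)<0$ persists on the interval where $V''<\eta$: indeed, if $V'(t_{\ast})=0$ were the first zero, then $V'<0$ and $V''-\eta<0$ strictly on $[0,t_{\ast})$ give $W'>0$, yielding $W(t_{\ast})>W(0)\geq 0$, yet $W(t_{\ast})=-2\eta V(t_{\ast})<0$, an absurdity. Consequently $W'\geq 0$, so $W(t)\geq 0$ and $V'(t)\leq-\sqrt{2\eta V(t)}$; integrating gives $\sqrt{V(t)}\leq\sqrt{V(0)}-\sqrt{\eta/2}\,t$, so $V(t)\to 0$ by some $t_{\ast}\leq\sqrt{2V(0)/\eta}$. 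The uncertainty principle $\|u\|_{L^{2}}^{2}\leq(2/d)\|xu\|_{L^{2}}\|\nabla u\|_{L^{2}}$ then forces $\|u(t)\|_{\dot{H}^{1}}\to\infty$ as $t\to t_{\ast}^{-}$, so by the blow-up alternative $T^{\ast}\leq t_{\ast}<\infty$. The main technical obstacle, common to both parts, is the bootstrap ruling out the crossing of $P(u(t))[M(u_{0})]^{\sigma_{c}}$ through the ground state threshold before the conclusion is reached; in Part 2 this amounts to showing the persistence of $V'<0$ and $V''<\eta$ on the entire lifespan, which requires combining the $W$-monotonicity with the uniqueness of the ground state as extremiser in the sharp Gagliardo--Nirenberg inequality, while in Part 1 the analogous step extracts the \emph{strict} sign needed for (1.9).
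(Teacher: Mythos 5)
Your reformulation is faithful to the paper's setup: your $\eta$ is exactly the paper's $x_{0}$ (defined through \eqref{GrindEQ__3_35_}), the equivalence of \eqref{GrindEQ__1_27_} with $(V'(0))^{2}\ge 2\eta V(0)$ is correct, and the dictionary $V''(t)\gtrless\eta \Leftrightarrow P(u(t))[M(u_{0})]^{\sigma_{c}}\lessgtr P_{G}[M_{G}]^{\sigma_{c}}$ is the same as the paper's \eqref{GrindEQ__3_30_}. However, the core of the proof is missing. The whole difficulty is to prevent $V''(t)$ from crossing the value $\eta$, and your $W$-monotonicity argument presupposes exactly that: in Part 2 you prove that $V'$ cannot vanish \emph{as long as} $V''<\eta$, and then conclude $W'\ge 0$ on the whole lifespan, which silently uses $V''\le\eta$ for all $t$ --- the very statement that is unproved. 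Nothing in your toolkit rules out a crossing time $t_{1}$ with $V''(t_{1})=\eta$ while $V'(t_{1})<0$; after such a time $W$ may decrease and become negative, and the integration $V'\le-\sqrt{2\eta V}$ collapses. The naive Cauchy--Schwarz bound $(V')^{2}\le 16V\|u\|_{\dot H^{1}}^{2}$ cannot exclude this: at $V''=\eta$ it only gives $(z')^{2}\le 2\,\frac{4(d\sigma+2b)E(u_{0})-\eta}{d\sigma+2b-4}$, which is far above $\eta/2$, so no contradiction arises. Likewise, your rigidity claim in Part 1 (``$P(u(T_{0}))[M(u_{0})]^{\sigma_{c}}=P_{G}[M_{G}]^{\sigma_{c}}$ plus $W(T_{0})\ge0$ identifies $u(T_{0})$ with a ground state'') is unsubstantiated: equality of the potential energy with its ground-state value does not make $u(T_{0})$ an optimizer of the Gagliardo--Nirenberg inequality, and no such rigidity is available above the threshold.

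What closes the argument in the paper is a genuinely new estimate that you never state: Lemma 3.4, obtained by applying the sharp Gagliardo--Nirenberg inequality to $e^{i\lambda|x|^{2}}u$ and optimizing in $\lambda$ (note the inverse-square term is invariant under this phase, so the estimate holds for all $c>-c(d)$ without the ``constant absorbable into $\eta$'' hand-waving, which would in any case destroy the sharp correspondence between $\eta$ and the hypotheses \eqref{GrindEQ__1_26_}--\eqref{GrindEQ__1_27_}). Combined with \eqref{GrindEQ__3_29_}--\eqref{GrindEQ__3_30_} this yields $(z'(t))^{2}\le 4f(V''(t))$ with $f$ as in \eqref{GrindEQ__3_33_}, decreasing then increasing with minimum at $x_{0}$ and $f(x_{0})=x_{0}/8$. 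It is this refined inequality that makes the bootstrap non-circular: while $(z')^{2}>4f(x_{0})=x_{0}/2$ one has $f(V'')>f(x_{0})$, hence $V''\neq x_{0}$, and continuity keeps $V''$ on its initial side of $x_{0}$; the concavity relation $z''=\frac{1}{z}\bigl[\frac{V''}{2}-(z')^{2}\bigr]$ then propagates the strict inequality (paper's contradiction arguments around \eqref{GrindEQ__3_43_}--\eqref{GrindEQ__3_48_} and \eqref{GrindEQ__3_54_}--\eqref{GrindEQ__3_58_}). Finally, for Item 1 you also need a \emph{uniform} gap $V''(t)\ge x_{0}+\delta$ to obtain the strict supremum in \eqref{GrindEQ__1_9_}; the paper extracts this quantitatively via a Taylor expansion of $f$ near $x_{0}$ (estimates \eqref{GrindEQ__3_46_}--\eqref{GrindEQ__3_50_}), whereas your continuity-only argument would at best give the non-strict bound. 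As written, both parts of your proposal therefore have an essential gap at the threshold-crossing step.
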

\begin{rem}\label{lem 1.6.}
\textnormal{To our knowledge, Theorem 1.4 and Theorem 1.5 are the first global existence and blow-up results for the focusing intercritical INLS$_{c}$ equation \eqref{GrindEQ__1_1_} with data lying at, and above the ground state threshold.}
\end{rem}
\begin{rem}\label{lem 1.7.}
\textnormal{Theorem 1.3, Theorem 1.4 and Theorem 1.5 extend the global existence and blow-up results of \cite{CG21} for focusing INLS equation (i.e. \eqref{GrindEQ__1_1_} with $c=0$) to the focusing INLS$_{c}$ equation \eqref{GrindEQ__1_1_} with $c>-c(d)$.}
\end{rem}
This paper is organized as follows. In Section 2, we prove Theorem 1.1. In Section 3, we study the global existence and blow up of $H_{c}^{1}$-solutions lying below, at, and above the ground state threshold.

\section{Criteria for global existence and blow-up}

In this section, we prove Theorem 1.1. Throughout the paper, $C>0$ will denote positive universal constant, which can be different at different places. $a\lesssim b$ means $a\le Cb$ for some constant $C>0$. We also write $a\sim b$ if $a\lesssim b \lesssim a$.
\subsection{Variational Analysis}
In this subsection, we recall the sharp Gagliardo--Nirenberg inequality related the focusing INLS$_{c}$ equation \eqref{GrindEQ__1_1_}. First of all, we recall the existence of a ground state.
\begin{lem}[Existence of a ground state, \cite{CG21}]\label{lem 2.1.}
Let $d\ge 3$, $0<\sigma<\frac{4-2b}{d-2}$, $c>-c(d)$ and $0<b<2$. There exists a positive solution to the elliptic equation
\begin{equation}\label{GrindEQ__2_1_}
P_{c}Q+Q-|x|^{-b}|Q|^{\sigma}Q=0
\end{equation}
in $H_{c}^{1}$. Moreover, all possible solutions have the same mass $M_{G}$, the same $\dot{H}^{1}_{c}$ norm $S_{G}$ and the same energy $E_{G}$.
\end{lem}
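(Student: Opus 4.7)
The plan is to obtain a positive ground state as a maximizer of the scale-invariant Weinstein-type functional
$$J(f):=\frac{P(f)}{\|f\|_{\dot{H}_c^1}^{\alpha}\|f\|_{L^2}^{\beta}},\qquad \alpha:=\frac{d\sigma+2b}{2},\quad \beta:=\sigma+2-\frac{d\sigma+2b}{2},$$
where the exponents are fixed by demanding invariance under the two-parameter rescaling $f\mapsto\mu f(\lambda\cdot)$; the intercritical range of $\sigma$ makes both exponents positive. Setting $W_{GN}:=\sup J$, the strategy is: (i) show $W_{GN}$ is finite and attained by some $Q\in H_c^1\setminus\{0\}$; (ii) derive the Euler--Lagrange equation and rescale to obtain \eqref{GrindEQ__2_1_}, then pass to $|Q|$ and use a strong minimum principle to enforce positivity; (iii) use Pohozaev-type identities to show that every solution of \eqref{GrindEQ__2_1_} saturates $J=W_{GN}$, and deduce that $M(Q)$, $\|Q\|_{\dot{H}_c^1}$ and $E(Q)$ are determined by $W_{GN}$ alone.

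The main obstacle is the compactness in step (i). Take a maximizing sequence $\{f_n\}\subset H_c^1$; after the two-parameter rescaling we may normalize $\|f_n\|_{L^2}=\|f_n\|_{\dot{H}_c^1}=1$ and $P(f_n)\to W_{GN}$. By the sharp Hardy inequality \eqref{GrindEQ__1_2_} and $c>-c(d)$, the $\dot H^1$ and $\dot H_c^1$ norms are comparable, so $\{f_n\}$ is bounded in $H^1$ and some subsequence converges weakly to $Q$. The delicate point is to show $Q\not\equiv 0$ and that $P(f_n)\to P(Q)$; here the singular weight $|x|^{-b}$ in $P(f)$ breaks translation invariance and in fact localizes the nonlinearity near the origin, which is favorable. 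Splitting $P(f_n)$ according to $|x|\le R$ and $|x|>R$, the outer piece is bounded by $R^{-b}\|f_n\|_{L^{\sigma+2}}^{\sigma+2}$ and is uniformly small for large $R$ by Sobolev embedding (valid since $\sigma+2<\tfrac{2d}{d-2}$ in the intercritical range), while the inner piece converges to the corresponding integral over $Q$ through the compact embedding of $H^1(B_R)$ into the weighted space $L^{\sigma+2}(B_R,|x|^{-b}dx)$. This yields $P(Q)=W_{GN}$, and weak lower semicontinuity of both denominators forces $\|Q\|_{L^2}=\|Q\|_{\dot{H}_c^1}=1$, so $Q$ is an extremizer.

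For step (ii), the Euler--Lagrange equation reads $P_cQ+\lambda_1 Q=\lambda_2|x|^{-b}|Q|^{\sigma}Q$ with positive Lagrange multipliers determined by pairing with $Q$ and with $x\cdot\nabla Q$; a scaling $Q(x)\mapsto aQ(\lambda x)$ with suitable $a,\lambda>0$ normalizes both multipliers to $1$ and yields \eqref{GrindEQ__2_1_}. Replacing $Q$ by $|Q|$ does not increase the $\dot{H}_c^1$ norm (Kato-type inequality for the $P_c$ quadratic form), so the extremizer may be chosen nonnegative, and a strong minimum principle for the operator $P_c+1$ then forces $Q>0$. For step (iii), multiplying \eqref{GrindEQ__2_1_} by $Q$ and integrating gives $\|Q\|_{\dot{H}_c^1}^2+\|Q\|_{L^2}^2=P(Q)$, while pairing with the dilation generator $x\cdot\nabla Q$ yields
$$\frac{d-2}{2}\|Q\|_{\dot{H}_c^1}^2+\frac{d}{2}\|Q\|_{L^2}^2=\frac{d-b}{\sigma+2}P(Q),$$
whose linear combination produces $\|Q\|_{\dot{H}_c^1}^2=\frac{d\sigma+2b}{2(\sigma+2)}P(Q)$ and a matching expression for $\|Q\|_{L^2}^2$. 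Substituting these into $J(Q)=W_{GN}$ determines $P(Q)$ purely from $W_{GN}$ and the exponents; since every solution of \eqref{GrindEQ__2_1_} satisfies the same two identities regardless of whether it was obtained as an extremizer, all of $M(Q)$, $S_G=\|Q\|_{\dot{H}_c^1}$, and $E(Q)$ are common to all solutions, which completes the proof.
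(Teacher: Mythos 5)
The paper itself does not prove this lemma (it is imported from \cite{CG21}), so I can only judge your argument on its own terms; your steps (i)--(ii) are the standard Weinstein variational construction and are fine in outline — the weighted compactness you invoke is exactly the compact embedding $H_c^1\hookrightarrow L^{\sigma+2}(|x|^{-b}dx)$ that the paper quotes as Lemma \ref{lem 3.1.}, the diamagnetic-type inequality $\bigl\||f|\bigr\|_{\dot H_c^1}\le\|f\|_{\dot H_c^1}$ justifies passing to $|Q|$, and the two-parameter rescaling of the Euler--Lagrange equation is correct. (Minor remark: positivity of both exponents only needs $\sigma<\frac{4-2b}{d-2}$, which is the full range of the lemma, not just the intercritical range.)

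The genuine gap is in step (iii). The two Pohozaev identities
$\|R\|_{\dot H_c^1}^2+\|R\|_{L^2}^2=P(R)$ and $\frac{d-2}{2}\|R\|_{\dot H_c^1}^2+\frac{d}{2}\|R\|_{L^2}^2=\frac{d-b}{\sigma+2}P(R)$
are two linear relations among the three quantities $\|R\|_{\dot H_c^1}^2$, $M(R)$, $P(R)$: they fix only the \emph{ratios} (this is precisely \eqref{GrindEQ__2_3_}), not the common size. Indeed, plugging them into your functional gives $J(R)=\kappa\,\|R\|_{\dot H_c^1}^{-\sigma}$ for a universal constant $\kappa$, so the assertion ``every solution of \eqref{GrindEQ__2_1_} saturates $J=W_{GN}$'' is \emph{equivalent} to the assertion ``every solution has $\|R\|_{\dot H_c^1}=S_G$'', which is exactly what you are trying to prove; the sharp inequality \eqref{GrindEQ__2_2_} only yields $J(R)\le W_{GN}$, i.e. $\|R\|_{\dot H_c^1}\ge S_G$, and Pohozaev cannot upgrade this to equality. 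In fact, read literally for all $H_c^1$ solutions of \eqref{GrindEQ__2_1_} the uniformity claim is false already for $b=c=0$, where nodal (excited) bound states have strictly larger mass and action than the ground state; the statement is meaningful, and provable, for the class of \emph{optimizers} of \eqref{GrindEQ__2_2_} (equivalently, ground states), which is also all that the rest of the paper uses. The repair is to restrict step (iii) to that class: any optimizer, after the two-parameter rescaling that normalizes it to solve \eqref{GrindEQ__2_1_}, satisfies $J=W_{GN}$ by definition, and then the Pohozaev relations pin down $M$, $\|\cdot\|_{\dot H_c^1}$, $P$ and $E$ uniquely in terms of $W_{GN}$, $d$, $b$, $\sigma$, so all such solutions share $M_G$, $S_G$, $P_G$, $E_G$; for general solutions of \eqref{GrindEQ__2_1_} one can only conclude the proportionality relations and the one-sided bound $\|R\|_{\dot H_c^1}\ge S_G$.
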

\begin{lem}[Sharp Gagliardo--Nirenberg inequality, \cite{CG21}]\label{lem 2.2.}
Let $d\ge 3$, $0<\sigma<\frac{4-2b}{d-2}$, $c>-c(d)$ and $0<b<2$. Then for $f\in H_{c}^{1}$, we have
\begin{equation}\label{GrindEQ__2_2_}
P(f)\le C_{GN}\left\|f\right\|_{\dot{H}_{c}^{1}}^{\frac{d\sigma+2b}{2}}
\left\|f\right\|_{L^{2}}^{\frac{4-2b-\sigma(d-2)}{2}},
\end{equation}
where $P(f)$ is given in \eqref{GrindEQ__1_10_}. The equality in \eqref{GrindEQ__2_2_} is attained by a function $Q\in H_{c}^{1}$, which is a positive solution to the elliptic equation \eqref{GrindEQ__2_1_}.
\end{lem}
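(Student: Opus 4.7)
My plan is to recast the sharp inequality as a Weinstein-type minimization problem and identify the extremizer with the ground state from Lemma 2.1. Setting $\alpha=\tfrac{d\sigma+2b}{2}$ and $\beta=\tfrac{4-2b-\sigma(d-2)}{2}$, introduce the Weinstein functional
\[
J(f):=\frac{\|f\|_{\dot H_c^1}^{\alpha}\|f\|_{L^2}^{\beta}}{P(f)},\qquad f\in H_c^1\setminus\{0\}.
\]
Then \eqref{GrindEQ__2_2_} with sharp constant is equivalent to $C_{GN}^{-1}=\inf_{f\neq 0}J(f)$. A short homogeneity check, using $\alpha+\beta=\sigma+2$ and $\tfrac{(2-d)\alpha-d\beta}{2}=b-d$, shows that $J$ is invariant under the two-parameter rescaling $f\mapsto\mu f(\lambda\cdot)$. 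This invariance lets me reduce to the constrained maximization of $P(f)$ subject to $\|f\|_{L^2}=\|f\|_{\dot H_c^1}=1$.

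For the existence of a maximizer, I would take a maximizing sequence $(f_n)$; by the Hardy equivalence $\|\cdot\|_{\dot H_c^1}\sim\|\cdot\|_{\dot H^1}$ it is bounded in $H^1$, so $f_n\rightharpoonup f^*$ along a subsequence. The decisive step is to show $P(f_n)\to P(f^*)$. I would split $\mathbb R^d$ into $\{|x|\le\varepsilon\}$, $\{\varepsilon<|x|<R\}$, and $\{|x|\ge R\}$: the inner region is handled by the Hardy--Sobolev inequality, which tames the $|x|^{-b}$ singularity uniformly in $n$; the middle region by the compact Rellich embedding; the outer region by combining the $|x|^{-b}$-decay with Sobolev interpolation. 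Because the weight $|x|^{-b}$ breaks translation invariance, the usual Lions-type escape scenarios collapse, so $f^*\neq 0$; weak lower semicontinuity of Sobolev norms then identifies $f^*$ as a maximizer.

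The Euler--Lagrange equation for $f^*$ reads $\mu_1 P_c f^*+\mu_2 f^*=\mu_3|x|^{-b}|f^*|^{\sigma}f^*$ with positive Lagrange multipliers, and a rescaling $f^*(x)=a\,Q(\lambda x)$ with $a,\lambda>0$ chosen appropriately converts it into the ground state equation \eqref{GrindEQ__2_1_}. Lemma 2.1 then guarantees that every positive solution shares the same $M_G$, $S_G$, $E_G$, and hence also the same $P_G$ via the identity $S_G^2+M_G=P_G$ obtained by testing \eqref{GrindEQ__2_1_} against $Q$. Consequently $C_{GN}^{-1}=J(Q)$, independent of the particular choice of ground state, which yields \eqref{GrindEQ__2_2_} together with the attainment statement. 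The principal obstacle is the compactness step for the weighted nonlinearity: the singularity at the origin demands careful use of the Hardy--Sobolev inequality, and because no symmetric-decreasing rearrangement is obviously adapted to $P_c$, the escape of mass to infinity must be excluded purely through the decay of the weight combined with the \emph{a priori} Sobolev bound. Once this compactness is in place, the identification with a ground state and the explicit value of $C_{GN}$ follow cleanly.
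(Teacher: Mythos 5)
Your proposal is essentially the proof that the cited source uses: the paper itself does not prove Lemma 2.2 but quotes it from Campos--Guzm\'{a}n \cite{CG21}, where the sharp constant is obtained exactly by minimizing a Weinstein functional $J(f)=\|f\|_{\dot H_c^1}^{\alpha}\|f\|_{L^2}^{\beta}/P(f)$, using the scale invariance of $P_c$ (the potential $c|x|^{-2}$ is homogeneous of degree $-2$, so your two-parameter rescaling check is valid) and the compactness of the weighted embedding $H_c^1\hookrightarrow L^{\sigma+2}(|x|^{-b}dx)$ --- precisely the ingredient this paper restates as Lemma 3.1 --- followed by the Euler--Lagrange equation and a rescaling to the ground state equation \eqref{GrindEQ__2_1_}. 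Your homogeneity identities ($\alpha+\beta=\sigma+2$, $\frac{(2-d)\alpha-d\beta}{2}=b-d$) and the identity $S_G^2+M_G=P_G$ from testing \eqref{GrindEQ__2_1_} against $Q$ are correct, so the constant $C_{GN}=J(Q)^{-1}$ is indeed well defined by Lemma 2.1. The only point you gloss over is why the optimizer can be taken \emph{positive}: one must pass from $f^{*}$ to $|f^{*}|$ using $\left|\nabla|f|\right|\le\left|\nabla f\right|$ (so that $\| |f| \|_{\dot H_c^1}\le\|f\|_{\dot H_c^1}$ while $P$ and the $L^2$ norm are unchanged), which is exactly the device used later in the proof of Lemma 3.2; this is a minor omission, not a gap in the strategy.
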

\begin{rem}\label{rem 2.3.}
\textnormal{We also have the following Pohozaev identities:
\begin{equation}\label{GrindEQ__2_3_}
M_{G}=\frac{4-2b-(d-2)\sigma}{d\sigma+2b}
[S_{G}]^{2}=\frac{4-2b-(d-2)\sigma}{2(\sigma+2)}P_{G}.
\end{equation}
In particular, we have
\begin{equation}\label{GrindEQ__2_4_}
C_{GN}=\frac{2(\sigma+2)}{d\sigma+2b}[K_{G}]^{-\frac{d\sigma-4+2b}{2}}.
\end{equation}
We also have
\begin{equation}\label{GrindEQ__2_5_}
E_{G}=\frac{d\sigma-4+2b}{2(d\sigma+2b)}[S_{G}]^{2}
=\frac{d\sigma-4+2b}{4(\sigma+2)}P_{G},
\end{equation}
which implies
\begin{equation}\label{GrindEQ__2_6_}
H_{G}=\frac{d\sigma-4+2b}{2(d\sigma+2b)}
[K_{G}]^{2}.
\end{equation}}
\end{rem}
\subsection{Virial Identities}
Given a real valued function $a$, we define the virial potential by
\begin{equation}\nonumber
V_{a}(t):=\int{a(x)\left|u(t,x)\right|^{2}dx.}
\end{equation}
A simple computation shows that the following result holds.
\begin{lem}[\cite{D18}]\label{lem 2.4.}
Let $d\ge3$ and $c>-c(d)$. If $u:I\times\mathbb R^{d}\to\mathbb C$ is a smooth-in-time and Schwartz-in-space solution to $iu_{t}-P_{c}u=N(u)$, with $N(u)$ satisfying $\textnormal{Im}(N(u)\bar{u})=0$, then we have for any $t\in I$,
\begin{equation}\nonumber
\frac{d}{dt}V_{a}(t)=2\int{\nabla a(x)\cdot \textnormal{Im}(\bar{u}(t,x)\nabla u(t,x))dx},
\end{equation}
and
\begin{eqnarray}\begin{split}\nonumber
\frac{d^{2}}{dt^{2}}V_{a}(t)&=-\int{\Delta^{2}a(x)|u(t,x)|^{2}dx+4\sum_{j,k=1}^{d}
{\int{\partial_{jk}^{2}a(x)\textnormal{Re}(\partial_{k}u(t,x)\partial_{j}\bar{u}(t,x))dx}}}\\
&+4c\int{\nabla a(x)\cdot\frac{x}{|x|^{4}}|u(t,x)|^{2}dx}+2\int{\nabla a(x)\cdot \left\{N(u),u\right\}_{p}(t,x)dx},
\end{split}\end{eqnarray}
where $\left\{f,g\right\}_p:=\textnormal{Re}(f\nabla \bar{g}-g\nabla\bar{f})$ is the momentum bracket.
\end{lem}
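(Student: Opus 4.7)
The plan is a direct computation: differentiate $V_a(t)=\int a(x)|u(t,x)|^{2}dx$ twice under the integral sign (legitimate by the smooth-in-time, Schwartz-in-space hypothesis) and substitute the equation $iu_t=P_cu+N(u)$ at each step. Both identities are classical in the free Schr\"odinger case, so the genuinely new contribution is the term produced by the inverse-square potential, which I will isolate throughout by the decomposition $P_c=-\Delta+c|x|^{-2}$.

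For the first identity I would start from $\partial_t|u|^{2}=2\,\textnormal{Re}(\bar u u_t)=2\,\textnormal{Im}(\bar u(P_cu+N(u)))$. The hypothesis $\textnormal{Im}(N(u)\bar u)=0$ (together with $\bar u N(u)=N(u)\bar u$) kills the nonlinear piece, and since $c|x|^{-2}|u|^{2}$ is pointwise real the potential piece vanishes as well, leaving only $\partial_t|u|^{2}=-2\,\textnormal{Im}(\bar u\Delta u)=-2\,\nabla\!\cdot\!\textnormal{Im}(\bar u\nabla u)$, where the last identity uses that $\nabla\bar u\cdot\nabla u=|\nabla u|^{2}$ is real. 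Multiplying by $a$ and integrating by parts gives the stated formula for $\frac{d}{dt}V_a$.

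For the second identity I would differentiate once more and work componentwise on the current $J_k(t):=\textnormal{Im}(\bar u\,\partial_k u)$. From $u_t=-i(P_cu+N(u))$ and $\bar u_t=i\overline{P_cu+N(u)}$, taking the imaginary part of $\bar u_t\,\partial_k u+\bar u\,\partial_k u_t$ yields
\[\partial_tJ_k=\textnormal{Re}\bigl(\overline{P_cu}\,\partial_k u-\bar u\,\partial_kP_cu\bigr)+\textnormal{Re}\bigl(\overline{N(u)}\,\partial_k u-\bar u\,\partial_kN(u)\bigr),\]
and unfolding the definition shows the second bracket is exactly the $k$-th component of $\{N(u),u\}_p$. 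Splitting $P_c=-\Delta+c|x|^{-2}$, the potential contribution to the first bracket telescopes by the product rule: the $c|x|^{-2}\bar u\,\partial_k u$ terms cancel, leaving only $-c\,\partial_k(|x|^{-2})|u|^{2}=2cx_k|x|^{-4}|u|^{2}$, which upon integration against $2\,\partial_k a$ and summation in $k$ produces the $4c\int\nabla a\cdot x|x|^{-4}|u|^{2}$ term. For the Laplacian part $-\overline{\Delta u}\,\partial_k u+\bar u\,\partial_k\Delta u$, I would run the two standard rounds of integration by parts used in the classical NLS virial identity: the first round moves a derivative from $u$ onto $\nabla a$ and assembles the Hessian contribution $4\sum_{j,k}\int\partial_{jk}^{2}a\,\textnormal{Re}(\partial_k u\,\partial_j\bar u)$, while the second round pushes the remaining derivatives onto $a$ alone and produces $-\int\Delta^{2}a\,|u|^{2}$.

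The only step that requires real care is the integration involving the singular potential, since $\nabla a\cdot x|x|^{-4}|u|^{2}$ is only marginally integrable near the origin. I expect the cleanest way to make every manipulation rigorous is to regularize $|x|^{-2}$ by $(|x|^{2}+\varepsilon)^{-1}$, run the product-rule identities above for the regularized operator, and then pass $\varepsilon\to 0$ using the Schwartz decay of $u$ together with the sharp Hardy bound available thanks to $c>-c(d)$. All remaining manipulations are elementary pointwise identities and integrations by parts whose boundary terms vanish by Schwartz decay.
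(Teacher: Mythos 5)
Your computation is correct and is essentially the paper's (i.e.\ the cited) argument: the paper offers no proof of Lemma 2.4, calling it ``a simple computation'' from \cite{D18}, and that computation is exactly your route --- differentiate $V_a$ under the integral, substitute $u_t=-i(P_cu+N(u))$, identify the nonlinear bracket as $\{N(u),u\}_p$, note that the $c|x|^{-2}$ part of $P_c$ contributes only through $-c\,\partial_k(|x|^{-2})|u|^{2}=2cx_k|x|^{-4}|u|^{2}$, and run the classical two integrations by parts on the $\Delta$ part to produce the Hessian and $-\int\Delta^{2}a\,|u|^{2}$ terms. Your closing remark about regularizing $|x|^{-2}$ is a reasonable precaution but not a required extra idea in the formal smooth/Schwartz setting of the lemma.
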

Note that if $N(u)=-|x|^{-b}|u|^{\sigma}u$, then
\begin{equation}\nonumber
\left\{N(u),u\right\}_{p}=\frac{\sigma}{\sigma+2}\nabla(|x|^{-b}|u|^{\sigma+2})+\frac{2}{\sigma+2}
\nabla(|x|^{-b})|u|^{\sigma+2}.
\end{equation}
Hence, we immediately have the following result.
\begin{cor}\label{cor 2.5.}
If $u$ is a smooth-in-time and Schwartz-in-space solution to the focusing INLS$_{c}$ equation, then we have for any $t\in I$,

\begin{eqnarray}\begin{split}\nonumber
\frac{d^{2}}{dt^{2}}V_{a}(t)&=-\int{\Delta^{2}a(x)|u(t,x)|^{2}dx+4\sum_{j,k=1}^{d}
{\int{\partial_{jk}^{2}a(x)\textnormal{Re}(\partial_{k}u(t,x)\partial_{j}\bar{u}(t,x))dx}}}\\
&+4c\int{\nabla a(x)\cdot\frac{x}{|x|^{4}}|u(t,x)|^{2}dx}-\frac{2\sigma}{\sigma+2}
\int{\Delta a(x)|x|^{-b}|u(t,x)|^{\sigma+2}dx}\\
&+\frac{4}{\sigma+2}\int{\nabla a(x)\cdot \nabla(|x|^{-b})|u(t,x)|^{\sigma+2}dx}.
\end{split}\end{eqnarray}
\end{cor}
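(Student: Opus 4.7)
The plan is to specialize Lemma 2.4 to the focusing INLS$_c$ equation by taking $N(u)=-|x|^{-b}|u|^{\sigma}u$. This choice is legitimate: rewriting the equation as $iu_t-P_cu=N(u)$ and observing that $N(u)\bar u=-|x|^{-b}|u|^{\sigma+2}$ is real-valued gives $\textnormal{Im}(N(u)\bar u)=0$, so the hypothesis of Lemma 2.4 is satisfied. The first three terms on the right-hand side of Lemma 2.4 are independent of $N$ and already appear verbatim in the claimed identity, so the proof reduces entirely to rewriting the bracket term $2\int\nabla a(x)\cdot\{N(u),u\}_p(t,x)\,dx$.

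The key input is the momentum-bracket identity stated in the excerpt immediately before the corollary. Using $\textnormal{Re}(u\nabla\bar u)=\tfrac12\nabla|u|^2$, a short direct calculation from the definition $\{f,g\}_p=\textnormal{Re}(f\nabla\bar g-g\nabla\bar f)$ yields $\{N(u),u\}_p=|u|^2\nabla(|x|^{-b}|u|^\sigma)$; expanding by the product rule together with $\nabla|u|^{\sigma+2}=\tfrac{\sigma+2}{2}|u|^\sigma\nabla|u|^2$ rewrites this in the two-term form
\[
\{N(u),u\}_p=\frac{\sigma}{\sigma+2}\nabla\!\left(|x|^{-b}|u|^{\sigma+2}\right)+\frac{2}{\sigma+2}\nabla\!\left(|x|^{-b}\right)|u|^{\sigma+2}.
\]
Substituting this into Lemma 2.4 and integrating by parts on the divergence piece produces
\[
\frac{2\sigma}{\sigma+2}\int\nabla a(x)\cdot\nabla\!\left(|x|^{-b}|u|^{\sigma+2}\right)dx=-\frac{2\sigma}{\sigma+2}\int\Delta a(x)\,|x|^{-b}|u(t,x)|^{\sigma+2}\,dx,
\]
which together with the remaining term $\tfrac{4}{\sigma+2}\int\nabla a(x)\cdot\nabla(|x|^{-b})|u|^{\sigma+2}\,dx$ reproduces the last two integrals in the statement, completing the identification.

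No step constitutes a genuine obstacle: the content is entirely computational. The only point requiring a little care is the integration by parts in the presence of the singular weight $|x|^{-b}$, which is easily justified by the Schwartz-in-space regularity of $u(t,\cdot)$ together with $0<b<2$ and $d\ge 3$: the boundary integral on $\partial B_\varepsilon$ near the origin is of order $\varepsilon^{d-1-b}\to 0$ as $\varepsilon\to 0$, while rapid decay of $|u|^{\sigma+2}$ against any polynomially growing weight built from $a$ and its derivatives handles infinity.
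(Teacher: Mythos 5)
Your proposal is correct and follows essentially the same route as the paper: the paper simply notes the momentum-bracket identity $\{N(u),u\}_p=\frac{\sigma}{\sigma+2}\nabla(|x|^{-b}|u|^{\sigma+2})+\frac{2}{\sigma+2}\nabla(|x|^{-b})|u|^{\sigma+2}$ for $N(u)=-|x|^{-b}|u|^{\sigma}u$ and substitutes it into Lemma 2.4, with the same integration by parts producing the $\Delta a$ term. Your extra remarks verifying $\mathrm{Im}(N(u)\bar u)=0$ and justifying the integration by parts near the singular weight are harmless additions to the same argument.
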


We have the following standard virial identity.
\begin{lem}[Standard Virial Identity, \cite{JAK21}]\label{lem 2.6.}
Let $d\ge 3$, $c>-c(d)$. Let $u_{0}\in H_{c}^{1}$ be such that $|x|u_{0}\in L^2$ and $u:I\times\mathbb R^{d}\to \mathbb C$ be the corresponding solution to the focusing INLS$_{c}$ equation. Then, $|x|u\in C\left(I,\;L^2\right)$. Moreover, for any $t\in I$,
\begin{equation}\nonumber
\frac{d^2}{dt^2}\left\|xu(t)\right\|_{L^{2}}^{2}=8G(u(t)),
\end{equation}
where $G(\cdot)$ is given in \eqref{GrindEQ__1_12_}.
\end{lem}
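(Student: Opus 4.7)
The plan is to apply Corollary~\ref{cor 2.5.} with the weight $a(x)=|x|^{2}$ at the formal level, observe that the right-hand side collapses to exactly $8G(u(t))$, and then justify the identity rigorously for $H_{c}^{1}$ solutions with $|x|u_{0}\in L^{2}$ via a truncated-weight approximation.

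First, substitute $a(x)=|x|^{2}$ into the formula of Corollary~\ref{cor 2.5.}. Using $\partial^{2}_{jk}a=2\delta_{jk}$, $\Delta a=2d$, $\Delta^{2}a=0$, $\nabla a\cdot x/|x|^{4}=2|x|^{-2}$ and $\nabla a\cdot\nabla(|x|^{-b})=-2b|x|^{-b}$, the five terms collapse to
\begin{equation}\nonumber
\frac{d^{2}}{dt^{2}}V_{a}(t)=8\|u(t)\|_{\dot H_{c}^{1}}^{2}-\frac{4(d\sigma+2b)}{\sigma+2}P(u(t))=8G(u(t)),
\end{equation}
matching \eqref{GrindEQ__1_12_}. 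This establishes the identity when $u$ is smooth-in-time and Schwartz-in-space.

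Second, to extend this to $u_{0}\in H_{c}^{1}$ with $|x|u_{0}\in L^{2}$, introduce a truncated weight $\phi_{R}(x):=R^{2}\phi(x/R)$, where $\phi\in C^{\infty}(\mathbb R^{d})$ is radial, nonnegative, equal to $|x|^{2}$ on $|x|\le 1$, and constant on $|x|\ge 2$. Then $\phi_{R}\le C|x|^{2}$, $\phi_{R}\uparrow|x|^{2}$ pointwise, and all derivatives of $\phi_{R}$ are uniformly bounded, with $\|\Delta^{2}\phi_{R}\|_{L^{\infty}}=O(R^{-2})$. Since the $\phi_{R}$-weights are bounded, the identity of Corollary~\ref{cor 2.5.} extends to $H_{c}^{1}$ solutions: approximate $u_{0}$ by Schwartz data $u_{0,n}$, apply the identity to the corresponding smooth solutions $u_{n}$, and pass $n\to\infty$ using local well-posedness in $H_{c}^{1}$ together with the boundedness of $\phi_{R}$ and its derivatives. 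This produces, for each fixed $R$,
\begin{equation}\nonumber
V_{\phi_{R}}(t)=V_{\phi_{R}}(0)+tV'_{\phi_{R}}(0)+\int_{0}^{t}\!\int_{0}^{s}\mathcal I_{R}(\tau)\,d\tau\,ds,
\end{equation}
where $\mathcal I_{R}(\tau)$ is the right-hand side of Corollary~\ref{cor 2.5.} at $a=\phi_{R}$.

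The main obstacle is the passage $R\to\infty$, which demands dominated convergence for each piece of $\mathcal I_{R}(\tau)$. The natural dominations are $|\partial^{2}_{jk}\phi_{R}|\lesssim 1$ with pointwise limit $2\delta_{jk}$ (controlled by $|\nabla u|^{2}\in L^{1}$); $|\nabla\phi_{R}\cdot x/|x|^{4}|\lesssim |x|^{-2}$ (controlled via \eqref{GrindEQ__1_2_}); $|\Delta\phi_{R}|\lesssim 1$ and $|\nabla\phi_{R}\cdot\nabla(|x|^{-b})|\lesssim |x|^{-b}$ (both controlled by $|x|^{-b}|u|^{\sigma+2}\in L^{1}$ via Lemma~\ref{lem 2.2.}); and the biharmonic term contributes $O(R^{-2})M(u)\to 0$. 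Simultaneously $V_{\phi_{R}}(t)\to\|xu(t)\|_{L^{2}}^{2}$ by monotone convergence, once one secures the a priori bound $|x|u(t)\in L^{\infty}_{\mathrm{loc}}(I;L^{2})$; this bound is itself obtained from the truncated identity applied to the approximate smooth solutions, using that $\mathcal I_{R}(\tau)$ is dominated uniformly in $R$ by $\|u(\tau)\|_{H_{c}^{1}}^{2}+P(u(\tau))$, hence by the conserved mass and energy via Lemma~\ref{lem 2.2.}.

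Finally, the assertion $|x|u\in C(I,L^{2})$ follows from the continuity in $t$ of $V_{\phi_{R}}(\cdot)$ for each $R$ together with the uniform-in-$R$ bound on $\mathcal I_{R}$ over compact subintervals of $I$, which yields equicontinuity of $\{V_{\phi_{R}}(\cdot)\}_{R}$; the monotone pointwise limit $\|xu(\cdot)\|_{L^{2}}^{2}$ is therefore continuous. Twice-differentiating the limiting integrated identity in $t$ then yields the claimed virial formula.
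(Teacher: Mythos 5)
The paper itself does not prove Lemma 2.6; it is quoted from \cite{JAK21} (the same identity for the NLS$_c$ case is in \cite{D18}), so there is no internal proof to compare against. Your argument is the standard one and its core is correct: the substitution $a(x)=|x|^{2}$ into Corollary 2.5, with $\partial^{2}_{jk}a=2\delta_{jk}$, $\Delta a=2d$, $\Delta^{2}a=0$, $\nabla a\cdot x|x|^{-4}=2|x|^{-2}$, $\nabla a\cdot\nabla(|x|^{-b})=-2b|x|^{-b}$, does collapse the right-hand side to $8\|u\|_{\dot H_{c}^{1}}^{2}-\tfrac{4(d\sigma+2b)}{\sigma+2}P(u)=8G(u)$, and the truncation scheme (bounded weights $\phi_{R}$, a priori bound on $V_{\phi_{R}}$ uniform in $R$ from the integrated identity, dominated convergence of each piece using Hardy for the $|x|^{-2}$ term and the Gagliardo--Nirenberg bound for the $|x|^{-b}$ terms, equicontinuity for the continuity of $\|xu(t)\|_{L^{2}}^{2}$) is exactly the accepted route to upgrade the formal identity to $H_{c}^{1}$ solutions with finite variance.

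One step is stated too casually: you invoke Corollary 2.5 for ``the corresponding smooth solutions'' generated by Schwartz data $u_{0,n}$. For this equation that class is empty in the literal sense --- the singular factors $c|x|^{-2}$ and $|x|^{-b}$ prevent solutions from being Schwartz-in-space (or even smooth at the origin), so the corollary as stated cannot be applied directly to $u_{n}$. The standard repair is either to justify the truncated identity $\frac{d^{2}}{dt^{2}}V_{\phi_{R}}(t)=\mathcal I_{R}(t)$ directly for $H^{1}_{c}$ solutions (regularizing in time/space and integrating by parts, using only that $\phi_{R}$ has bounded derivatives, as in Cazenave's treatment of the virial identity and in \cite{D18,JAK21}), or to approximate through $H^{2}_{c}$ solutions rather than Schwartz ones. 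With that substitution your dominated-convergence and equicontinuity analysis goes through unchanged; also note that the monotone convergence $V_{\phi_{R}}(t)\to\|xu(t)\|_{L^{2}}^{2}$ requires arranging $\phi_{R}$ nondecreasing in $R$ (or simply replacing it by Fatou plus the uniform bound), a cosmetic point. So the proposal is essentially the standard proof, with this one regularization step needing to be phrased correctly.
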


Let us introduce a function $\theta:[0,\infty)\to [0,\infty)$ satisfying
\begin{equation}\label{GrindEQ__2_7_}
\theta (r)=\left\{\begin{array}{l} {r^2,~\textrm{if}\;0\le r\le 1,}
\\ {\textrm{0},~\textrm{if}\;r\ge 2,} \end{array}\right.
\textrm{and}~~\theta''(r)\le 2 ~~\textrm{for}~~r\ge 0.
\end{equation}
For $R>1$, we define the radial function $\varphi_{R}:\mathbb R^{d}\to [0,\;\infty)$:
\begin{equation}\label{GrindEQ__2_8_}
\varphi_{R}(x)=\varphi_{R}(r):=R^{2}\theta(r/R),\;r=|x|.
\end{equation}
One can easily see that
\begin{equation}\label{GrindEQ__2_9_}
2-\varphi''_{R}(r)\ge0,\;2-\frac{\varphi'_{R}(r)}{r}\ge0,\;2d-\Delta\varphi_{R}(x)\ge0.
\end{equation}

We have the following localized virial estimate.
\begin{lem}[Localized Virial Estimate]\label{lem 2.7.}
Let $d \ge 3$, $0<b<2$, $c>-c(d)$, $R>1$ and $\varphi_{R}$ be as in \eqref{GrindEQ__2_8_}. Let $u:I\times\mathbb R^{d}\to \mathbb C$ be a solution to the focusing INLS$_{c}$ equation \eqref{GrindEQ__1_1_}. Then for any $t\in I$,
\begin{equation} \label{GrindEQ__2_10_}
\frac{d^2}{dt^2}V_{\varphi_{R}}(t)\le 8 G(u(t))+CR^{-2}+CR^{-b}\left\|u(t)\right\|_{H_{c}^{1}}^{\sigma+2},
\end{equation}
where $G$ is given in \eqref{GrindEQ__1_12_}.
\end{lem}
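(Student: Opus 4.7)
The plan is to apply Corollary~\ref{cor 2.5.} with $a=\varphi_R$ and then compare the resulting five-term expression against the integrand of $8G(u(t))$, which corresponds to the exact virial identity of Lemma~\ref{lem 2.6.} with the formal choice $a(x)=|x|^2$. Since $\varphi_R(x)=|x|^2$ holds on $\{|x|\le R\}$, every integrand produced by the $\varphi_R$-calculation agrees there with its $|x|^2$-counterpart, and the contribution from the interior region exactly reproduces the integrand of $8G(u(t))$ restricted to $\{|x|\le R\}$. The task therefore reduces to showing that the discrepancy $\mathcal{E}_R(t):=\frac{d^2}{dt^2}V_{\varphi_R}(t)-8G(u(t))$, which is supported in $\{|x|>R\}$, satisfies $\mathcal{E}_R(t)\le CR^{-2}+CR^{-b}\|u(t)\|_{H_c^1}^{\sigma+2}$.

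I will bound the five contributions in turn. The bi-Laplacian term $-\int\Delta^2\varphi_R|u|^2dx$ is $O(R^{-2})$ because $\|\Delta^2\varphi_R\|_{L^\infty}\lesssim R^{-2}$ and $\|u(t)\|_{L^2}$ is conserved. For the Hessian term, the radial identity
$$\partial^2_{jk}\varphi_R=\varphi_R''(r)\frac{x_jx_k}{r^2}+\frac{\varphi_R'(r)}{r}\left(\delta_{jk}-\frac{x_jx_k}{r^2}\right)$$
gives $4\sum_{j,k}\partial^2_{jk}\varphi_R\,\textnormal{Re}(\partial_k u\,\partial_j\bar u)=4\varphi_R''|\partial_r u|^2+4(\varphi_R'/r)(|\nabla u|^2-|\partial_r u|^2)$, and the pointwise bounds $\varphi_R''\le 2$, $\varphi_R'/r\le 2$ from \eqref{GrindEQ__2_9_} yield $\le 8|\nabla u|^2$ pointwise; hence the Hessian excess over $\{|x|>R\}$ is non-positive and may be dropped when seeking an upper bound.

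The inverse-square term requires care since the sign of $c$ is arbitrary. The identity
$$4c\int\nabla\varphi_R\cdot\frac{x}{|x|^4}|u|^2\,dx-8c\int|x|^{-2}|u|^2\,dx=4c\int_{|x|>R}\frac{\varphi_R'(r)/r-2}{r^2}|u|^2\,dx,$$
combined with $|\varphi_R'/r-2|\le 2$ and $|x|^{-2}\le R^{-2}$ on $\{|x|>R\}$, yields a $C|c|R^{-2}\|u(t)\|_{L^2}^2\lesssim R^{-2}$ bound independent of the sign of $c$. For the two nonlinear terms together with the nonlinear piece of $8G(u(t))$ restricted to $\{|x|>R\}$, the bounds $|\Delta\varphi_R|\le 2d$ and $|\nabla\varphi_R\cdot\nabla(|x|^{-b})|\le 2b|x|^{-b}$ reduce matters to estimating $\int_{|x|>R}|x|^{-b}|u|^{\sigma+2}dx\le R^{-b}\|u(t)\|_{L^{\sigma+2}}^{\sigma+2}$; the Sobolev embedding $H^1\hookrightarrow L^{\sigma+2}$ (valid since $\sigma+2<\frac{2d-2b}{d-2}<\frac{2d}{d-2}$) together with the norm equivalence $\|\cdot\|_{H_c^1}\sim\|\cdot\|_{H^1}$ then produces the desired $CR^{-b}\|u(t)\|_{H_c^1}^{\sigma+2}$ contribution.

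The main obstacle is the inverse-square term when $c<0$: the naive pointwise comparison $4c\varphi_R'/r^3\le 8c/r^2$ reverses direction, so one cannot simply discard this discrepancy as in the Hessian case. The remedy above absorbs the full discrepancy as an $R^{-2}$ error via mass conservation, regardless of the sign of $c$. Once this is arranged, collecting the five contributions yields \eqref{GrindEQ__2_10_}.
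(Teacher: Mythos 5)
Your proposal is correct and follows essentially the same route as the paper: apply Corollary~2.5 with $a=\varphi_R$, observe that the discrepancy with $8G(u(t))$ is supported in $\{|x|>R\}$, bound the bi-Laplacian term by $R^{-2}$ via mass conservation, drop the nonpositive Hessian excess, absorb the inverse-square discrepancy as an $O(R^{-2})$ error, and control the nonlinear remainder by $CR^{-b}\|u(t)\|_{H_c^1}^{\sigma+2}$ via Sobolev embedding, exactly as in \eqref{GrindEQ__2_13_}--\eqref{GrindEQ__2_16_}. The only cosmetic imprecision is that bounds such as $|\varphi_R'(r)/r-2|\le 2$ and $|\Delta\varphi_R|\le 2d$ need not hold with those exact constants (only the one-sided bounds in \eqref{GrindEQ__2_9_} are guaranteed, and $\theta'$ must be negative somewhere on $(1,2)$), but these quantities are bounded on $\{|x|>R\}$ by a constant depending only on $\theta$ (the paper's $S=\max_{r\ge1}\bigl(2-\theta'(r)/r\bigr)$), which is all your argument needs.
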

\begin{proof}
Applying Corollary 2.5 with $a(x)=\varphi_{R}(x)$, we have
\begin{eqnarray}\begin{split}\label{GrindEQ__2_11_}
\frac{d^{2}}{dt^{2}}V_{\varphi_{R}}(t)&=-\int{\Delta^{2}\varphi_{R}(x)|u(t,x)|^{2}dx}+4\sum_{j,k=1}^{d}
{\int{\partial_{jk}^{2}\varphi_{R}(x)\textnormal{Re}(\partial_{k}u(t,x)\partial_{j}\bar{u}(t,x))dx}}\\
&+4c\int{\nabla \varphi_{R}(x)\cdot\frac{x}{|x|^{4}}|u(t,x)|^{2}dx}-\frac{2\sigma}{\sigma+2}
\int{\Delta \varphi_{R}(x)|x|^{-b}|u(t,x)|^{\sigma+2}dx}\\
&+\frac{4}{\sigma+2}\int{\nabla \varphi_{R}(x)\cdot \nabla(|x|^{-b})|u(t,x)|^{\sigma+2}dx}.
\end{split}\end{eqnarray}
Noticing that
$$\partial_{j}=\frac{x_{j}}{r}\partial_{r},~\partial^{2}_{jk}=\left(\frac{\delta_{jk}}{r}-\frac{x_{j}x_{k}}{r^{3}}\right)\partial_{r}
+\frac{x_{j}x_{k}}{r^{2}}\partial^{2}_{r},$$
we have
\begin{eqnarray}\begin{split}\label{GrindEQ__2_12_}
&\sum_{j,k=1}^{d}
{\int{\partial_{jk}^{2}\varphi_{R}(x)\textnormal{Re}(\partial_{k}u(t,x)\partial_{j}\bar{u}(t,x))dx}} \\&~~~~~~=\int{\frac{\varphi'_{R}(r)}{r}\left|\nabla u(t,x)\right|^{2}dx}+\int{\left(\frac{\varphi''_{R}(r)}{r^{2}}-\frac{\varphi'_{R}(r)}{r^{3}}\right)
\left|x\cdot\nabla u(t,x)\right|^{2}dx.}
\end{split}\end{eqnarray}
In view of \eqref{GrindEQ__2_11_} and \eqref{GrindEQ__2_12_}, we have
\begin{eqnarray}\begin{split}\label{GrindEQ__2_13_}
\frac{d^{2}}{dt^{2}}V_{\varphi_{R}}(t)&=8G(u(t))-8\left\|u(t)\right\|_{\dot{H}_{c}^{1}}
+\frac{4(d\sigma+2b)}{\sigma+2}\int{|x|^{-b}|u(t,x)|^{\sigma+2}dx}\\
&-\int{\Delta^{2}\varphi_{R}(x)|u(t,x)|^{2}dx}
+4\int{\frac{\varphi'_{R}(r)}{r}\left|\nabla u(t,x)\right|^{2}dx}\\
&+4\int{\left(\frac{\varphi''_{R}(r)}{r^{2}}-\frac{\varphi'_{R}(r)}{r^{3}}\right)\left|x\cdot\nabla u(t,x)\right|^{2}dx}
+4c\int{\frac{\varphi'_{R}(r)}{r^{3}}|u(t,x)|^{2}dx}\\
&-\frac{2\sigma}{\sigma+2}\int{\Delta \varphi_{R}(x)|x|^{-b}|u(t,x)|^{\sigma+2}dx}-\frac{4b}{\sigma+2}\int{|x|^{-b}
\frac{\varphi'(r)}{r}|u(t,x)|^{\sigma+2}dx}.
\end{split}\end{eqnarray}
Since $\left\|\Delta^{2}\varphi_{R}\right\|_{L^{\infty}}\lesssim R^{-2}$, the conservation of mass implies that
\begin{equation}\label{GrindEQ__2_14_}
\left|\int{\Delta^{2}\varphi_{R}(x)|u(t,x)|^{2}dx}\right|\lesssim R^{-2}\left\|u(t)\right\|_{L^2}^{2}\lesssim R^{-2}.
\end{equation}
Using the conservation of mass and the following facts
$$
\left|x\cdot \nabla u\right|\le r|\nabla u|,~\varphi''_{R}(r)\le 2,~\frac{\varphi'_{R}(r)}{r}\le 2,
$$
we see that
\begin{eqnarray}\begin{split}\label{GrindEQ__2_15_}
&4\int{\frac{\varphi'_{R}(r)}{r}\left|\nabla u(t)\right|^{2}dx}
+4\int{\left(\frac{\varphi''_{R}(r)}{r^{2}}-\frac{\varphi'_{R}(r)}{r^{3}}\right)\left|x\cdot\nabla u(t)\right|^{2}dx}+4c\int{\frac{\varphi'_{R}(r)}{r^{3}}|u(t)|^{2}dx}-8\left\|u(t)\right\|_{\dot{H}_{c}^{1}}\\
&~~~~~~~~~~\le 4 \int{\left(2-\frac{\varphi'_{R}(r)}{r}\right)\left(-\left|\nabla u(t)\right|^{2}+\frac{\left|x\cdot\nabla u(t)\right|^{2}}{r^2}-c\frac{|u(t)|}{r^2}\right)dx}\\
&~~~~~~~~~~\le -4c\int{\left(2-\frac{\varphi'_{R}(r)}{r}\right)\frac{|u(t)|}{r^2}dx}
=-4c\int_{|x|>R}{\left(2-\frac{\varphi'_{R}(r)}{r}\right)\frac{|u(t)|}{|x|^2}dx}\\
&~~~~~~~~~~\le \max\left\{-4cS,0\right\} \int_{|x|>R}{R^{-2}|u(t)|^{2}dx}\le \max\left\{-4cSM(u(t)),0\right\}R^{-2}\lesssim R^{-2},
\end{split}\end{eqnarray}
where $S=\max_{r\ge 1}{2-\frac{\theta'(r)}{r}}$.
Furthermore, we have
\begin{eqnarray}\begin{split}\label{GrindEQ__2_16_}
&\frac{4(d\sigma+2b)}{\sigma+2}\int{|x|^{-b}|u(t,x)|^{\sigma+2}dx}-\frac{2\sigma}{\sigma+2}\int{\Delta \varphi_{R}(x)|x|^{-b}|u(t,x)|^{\sigma+2}dx}\\
&~~~~~~~~~~~~~~~~~~~~~~~~~~~~~~~~~~~~~~~~~~
-\frac{4b}{\sigma+2}\int{|x|^{-b}\frac{\varphi'(r)}{r}|u(t,x)|^{\sigma+2}dx}\\
&~~=\frac{2\sigma}{\sigma+2}\int{|x|^{-b}\left(2d-\Delta \varphi_{R}(x)\right)|u(t)|^{\sigma+2}dx}
+\frac{4b}{\sigma+2}\int{|x|^{-b}\left(2-\frac{\varphi'_{R}(r)}{r}\right)|u(t)|^{\sigma+2}dx}\\
&~~\le C\int_{|x|\ge R}{|x|^{-b}|u(t)|^{\sigma+2}}\le CR^{-b}\left\|u(t)\right\|_{L^{\sigma+2}}^{\sigma+2}\le CR^{-b}\left\|u(t)\right\|_{H_{c}^{1}}^{\sigma+2},
\end{split}\end{eqnarray}
where the last inequality follows from the Sobolev embedding and the fact $H_{c}^{1}\sim H^{1}$ as $\sigma<\frac{4-2b}{d-2}$.
In view of \eqref{GrindEQ__2_13_}--\eqref{GrindEQ__2_16_}, we get the desired result.
\end{proof}
\subsection{Proof of Theorem 1.1}
First we prove Item 1. Let $u:[0,T^{*})\times \mathbb R^{d}\to \mathbb C$ be a $H_{c}^{1}$-solution to \eqref{GrindEQ__1_1_} satisfying \eqref{GrindEQ__1_9_}. It follows from \eqref{GrindEQ__1_9_} and the conservation of mass and energy that
\begin{equation}\label{GrindEQ__2_17_}
\sup_{t\in [0,T^{*})}{\left\|u(t)\right\|_{\dot{H}_{c}^{1}}}\le M,
\end{equation}
for some $M<\infty$. This shows that $T^{*}=\infty$.

Next, we prove Item 2. Let $u:[0,T^{*})\times \mathbb R^{d}\to \mathbb C$ be a solution to \eqref{GrindEQ__1_1_} satisfying \eqref{GrindEQ__1_11_}. If $T^{*}<\infty$, then we are done. If $T^{*}=\infty$, then we have to show that there exists $t_{n}\to \infty$ such that $\left\|u(t_{n})\right\|_{\dot{H}_{c}^{1}}\to \infty$ as $n\to \infty$. Assume by contradiction that it doesn't hold, i.e. $\sup_{t\in [0,\infty)}{\left\|u(t)\right\|_{\dot{H}_{c}^{1}}}\le M_{0}$ for some $M_{0}>0$. By the conservation of mass, we have
$$
\sup_{t\in [0,\infty)}{\left\|u(t)\right\|_{H_{c}^{1}}}\le M_{1},
$$
for some $M_{1}>0$. Hence, it follows from Lemma 2.7 (localized virial estimate) and \eqref{GrindEQ__1_11_} that
$$
V''_{\varphi_{R}}(t)\le 8G(u(t))+CR^{-2}+CR^{-b}\left\|u(t)\right\|_{H_{c}^{1}}^{\sigma+2}\le -8\delta+CR^{-2}+CR^{-b}M_{1}^{\sigma+2},
$$
for all $t\in[0,\infty)$. By taking $R>1$ large enough, we have for all $t\in[0,\infty)$,
$$
V''_{\varphi_{R}}(t)\le -4\delta.
$$
Integrating this estimate, there exists $t_{0}>0$ sufficiently large such that $V_{\varphi_{R}}(t_{0})<0$ which is impossible. This completes the proof of the first part of Item 2. Let us assume that $u$ has finite variance. Then it follows from \eqref{GrindEQ__1_11_} and Lemma 2.6 that
$$
\frac{d^2}{dt^2}\left\|xu(t)\right\|_{L^{2}}^{2}=8G(u(t))\le -8\delta.
$$
By the classical argument of Glassey \cite{G77} the solution must blow up in finite time. This completes the proof of Theorem 1.1.

\section{Global existence and blow up below, at, and above the ground state threshold}
In this section, we prove Theorem 1.2, Theorem 1.4 and Theorem 1.5.
\subsection{Below the ground state threshold}
In this subsection, we study the global existence and blow-up below the ground state threshold.
\begin{proof}[\textbf{Proof of Theorem 1.2}]
\eqref{GrindEQ__1_13_} shows that there exists $\delta_{0}>0$ such that
\begin{equation}\label{GrindEQ__3_1_}
E(u_{0})[M(u_{0})]^{\sigma_{c}}\le (1-\delta_{0})H_{G}.
\end{equation}

\textbf{Proof of Item 1.} Let $u_{0}\in H_{c}^{1}$ satisfy \eqref{GrindEQ__1_13_} and \eqref{GrindEQ__1_14_}.
It follows from Lemma 3.5 of \cite{CG21}, \eqref{GrindEQ__3_1_} and the conservation of mass and energy that there exists $\delta>0$ such that
\begin{equation}\label{GrindEQ__3_2_}
\left\|u(t)\right\|_{\dot{H}_{c}^{1}}\left\|u(t)\right\|_{L^{2}}^{\sigma_{c}}\le (1-\delta)K_{G},
\end{equation}
for all $t\in (-T_{*},T^{*})$. Lemma 2.2 and \eqref{GrindEQ__3_2_} yield
\begin{eqnarray}\begin{split}\nonumber
P(u(t))[M(u(t))]^{\sigma_{c}}&\le C_{GN}\left\|u(t)\right\|_{\dot{H}_{c}^1}^{\frac{d\sigma+2b}{2}}
\left\|u(t)\right\|_{L^{2}}^{\frac{4-2b-(d-2)\sigma}{2}+2\sigma_{c}}\\
&=C_{GN}\left(\left\|u(t)\right\|_{\dot{H}_{c}^1}\left\|u(t)\right\|_{L^{2}}^{\sigma_{c}}\right)
^{\frac{d\sigma+2b}{2}}\\
&\le C_{GN}(1-\delta)^{\frac{d\sigma+2b}{2}}
[K_{G}]^{\frac{d\sigma+2b}{2}},
\end{split}\end{eqnarray}
for all $t\in (-T_{*},T^{*})$. By \eqref{GrindEQ__2_4_} and \eqref{GrindEQ__2_3_}, we have
$$
P(u(t))[M(u(t))]^{\sigma_{c}}\le \frac{2(\sigma+2)}{d\sigma+2b}(1-\delta)^{\frac{d\sigma+2b}{2}}
[K_{G}]^{2}=
(1-\delta)^{\frac{d\sigma+2b}{2}}P_{G}[M_{G}]^{\sigma_{c}},
$$
for all $t\in (-T_{*},T^{*})$ which shows \eqref{GrindEQ__1_15_}. By Theorem 1.1, the solution exists globally in time.

\textbf{Proof of Item 2.} Let $u_{0}\in H_{c}^{1}$ satisfy \eqref{GrindEQ__1_13_} and \eqref{GrindEQ__1_16_}. Lemma 3.5 of \cite{CG21} also shows that
\begin{equation}\label{GrindEQ__3_3_}
\left\|u(t)\right\|_{\dot{H}_{c}^{1}}\left\|u(t)\right\|_{L^{2}}^{\sigma_{c}}>
K_{G},
\end{equation}
for all $t\in (-T_{*},T^{*})$. By \eqref{GrindEQ__2_6_}, \eqref{GrindEQ__3_1_}, \eqref{GrindEQ__3_3_} and the conservation of mass and energy, we have
\begin{eqnarray}\begin{split}\nonumber
G(u(t))[M(u(t))]^{\sigma_{c}}&=\left\|u(t)\right\|_{\dot{H}_{c}^{1}}^{2}\left\|u(t)\right\|_{L^{2}}^{2\sigma_{c}}
-\frac{d\sigma+2b}{2(\sigma+2)}P(u(t))[M(u(t))]^{\sigma_{c}}\\
&=\frac{d\sigma+2b}{2}E(u(t))[M(u(t))]^{\sigma_{c}}-\frac{d\sigma-4+2b}{4}
\left(\left\|u(t)\right\|_{\dot{H}_{c}^{1}}\left\|u(t)\right\|_{L^{2}}^{\sigma_{c}}\right)^2\\
&\le \frac{d\sigma+2b}{2}(1-\delta_{0})H_{G}-\frac{d\sigma-4+2b}{4}
[K_{G}]^2\\
&=-\frac{d\sigma-4+2b}{4}\delta_{0}[K_{G}]^2,
\end{split}\end{eqnarray}
for all $t\in (-T_{*},T^{*})$. This shows \eqref{GrindEQ__1_11_} with
$$
\delta:= \frac{(d\sigma-4+2b)\delta_{0}[K_{G}]^{2}}{4[M(u_{0})]^{\sigma_{c}}}>0.
$$
By Theorem 1.1, the corresponding solution either blows up in finite time, or there exists a time sequence $(t_{n})_{n\ge 1}$ satisfying $|t_{n}|\to \infty$ such that $\left\| u(t_{n})\right\|_{\dot{H}_{c}^{1}}\to \infty$ as $n\to \infty$. This completes the proof of the first part of Item 2.

Let us prove the second part of Item 2.
If $u_{0}$ either has finite variance or is radially symmetric, then the solution blows up in finite time. This result was proved in \cite{CG21}.
It remains to consider the case of cylindrically symmetric data.
For $R>1$, we define the radial function $\psi_{R}:\mathbb R^{d-1}\to [0,\infty)$:
\begin{equation}\label{GrindEQ__3_4_}
\psi_{R}(y)=\psi_{R}(r):=R^{2}\theta(r/R),\;r=|y|,
\end{equation}
where $\theta: [0,\infty) \to [0,\infty)$ is given in \eqref{GrindEQ__2_7_}. We define the function on $\mathbb R^{d}$:
\begin{equation}\label{GrindEQ__3_5_}
\phi_{R}(x)=\phi_{R}(y,x_{d}):=\psi_{R}(y)+x_{d}^{2}.
\end{equation}
Using the same argument as in the proof of Lemma 2.7, we have
\begin{eqnarray}\begin{split}\nonumber
V''_{\phi_{R}}(t)&=-\int{\Delta_{y}^{2}\psi_{R}(y)|u(t,x)|^{2}dx}+4\sum_{j,k=1}^{d-1}
{\int{\partial_{jk}^{2}\psi_{R}(y)\textnormal{Re}(\partial_{k}u(t,x)\partial_{j}\bar{u}(t,x))dx}}\\
&~+4c\int{\nabla_{y} \psi_{R}(y)\cdot\frac{y}{|x|^{4}}|u(t,x)|^{2}dx}-\frac{2\sigma}{\sigma+2}
\int{\Delta_{y} \psi_{R}(y)|x|^{-b}|u(t,x)|^{\sigma+2}dx}\\
&~-\frac{4b}{\sigma+2}\int{|y|^{2}\frac{\psi'_{R}(r)}{r}|x|^{-b-2}|u(t,x)|^{\sigma+2}dx}
+8\left\|\partial_{d}u(t)\right\|_{L^2}^{2}-\frac{4\sigma}{\sigma+2}\int{|x|^{-b}|u(t,x)|^{\sigma+2}dx}\\
&~-\frac{8b}{\sigma+2}\int{x_{d}^2|x|^{-b-2}|u(t,x)|^{\sigma+2}dx}+8c\int{x_{d}^2\frac{|u(t,x)|^2}{|x|^4}dx.}\\
\end{split}\end{eqnarray}
We can rewrite it as
\begin{eqnarray}\begin{split}\label{GrindEQ__3_6_}
V''_{\phi_{R}}(t)&=8G(u(t))-8\left\|\nabla_{y}u(t)\right\|_{L^2}^2+\frac{4((d-1)\sigma+2b)}{\sigma+2}P(u(t))\\
&~-\int{\Delta_{y}^{2}\psi_{R}(y)|u(t,x)|^{2}dx}+4\sum_{j,k=1}^{d-1}
{\int{\partial_{jk}^{2}\psi_{R}(y)\textnormal{Re}(\partial_{k}u(t,x)\partial_{j}\bar{u}(t,x))dx}}\\
&~+4c\int{|y|^{2}\frac{\psi'_{R}(r)}{r}\frac{|u(t,x)|^{2}}{|x|^{4}}dx}-\frac{2\sigma}{\sigma+2}
\int{\Delta_{y} \psi_{R}(y)|x|^{-b}|u(t,x)|^{\sigma+2}dx}\\
&~-\frac{4b}{\sigma+2}\int{|y|^{2}\frac{\psi'_{R}(r)}{r}|x|^{-b-2}|u(t,x)|^{\sigma+2}dx}
-\frac{8b}{\sigma+2}\int{x_{d}^2|x|^{-b-2}|u(t,x)|^{\sigma+2}dx}\\
&~+8c\int{x_{d}^2\frac{|u(t,x)|^2}{|x|^4}dx}-8c\int{|x|^{-2}|u(t,x)|^{2}dx}\\
&=8G(u(t))-8\left\|\nabla_{y}u(t)\right\|_{L^2}^2+4\sum_{j,k=1}^{d-1}
{\int{\partial_{jk}^{2}\psi_{R}(y)\textnormal{Re}(\partial_{k}u(t,x)\partial_{j}\bar{u}(t,x))dx}}\\
&~-\int{\Delta_{y}^{2}\psi_{R}(y)|u(t,x)|^{2}dx}+\frac{2\sigma}{\sigma+2}
\int{\left(2(d-1)-\Delta_{y} \psi_{R}(y)\right)|x|^{-b}|u(t,x)|^{\sigma+2}dx}\\
&~+\frac{4b}{\sigma+2}\int{\left(2|x|^2-|y|^{2}\frac{\psi'_{R}(r)}{r}-2x_{d}^2\right)
|x|^{-b-2}|u(t,x)|^{\sigma+2}dx}\\
&~-4c\int{\left(2|x|^2-|y|^{2}\frac{\psi'_{R}(r)}{r}-2x_{d}^2\right)
\frac{|u(t,x)|^2}{|x|^4}dx}.
\end{split}\end{eqnarray}
Using the similar argument as in the proof of \eqref{GrindEQ__2_15_}, we can get
\begin{equation}\label{GrindEQ__3_7_}
-8\left\|\nabla_{y}u(t)\right\|_{L^2}^2+4\sum_{j,k=1}^{d-1}
{\int{\partial_{jk}^{2}\psi_{R}(y)\textnormal{Re}(\partial_{k}u(t,x)\partial_{j}\bar{u}(t,x))dx}}\le 0
\end{equation}
\begin{equation}\label{GrindEQ__3_8_}
\left|\int{\Delta_{y}^{2}\psi_{R}(y)|u(t,x)|^{2}dx}\right|\lesssim R^{-2}.
\end{equation}
Similarly, we also have
\begin{equation}\label{GrindEQ__3_9_}
\left|\int{\left(2(d-1)-\Delta_{y} \psi_{R}(y)\right)|x|^{-b}|u(t,x)|^{\sigma+2}dx}\right|\lesssim \int_{|y|\ge R}{|x|^{-b}|u(t,x)|^{\sigma+2}dx,}
\end{equation}
\begin{equation}\label{GrindEQ__3_10_}
\left|\int{\left(2|x|^2-|y|^{2}\frac{\psi'_{R}(r)}{r}-2x_{d}^2\right)
|x|^{-b-2}|u(t,x)|^{\sigma+2}dx}\right|\lesssim \int_{|y|\ge R}{|x|^{-b}|u(t,x)|^{\sigma+2}dx},
\end{equation}
\begin{equation}\label{GrindEQ__3_11_}
\left|\int{\left(2|x|^2-|y|^{2}\frac{\psi'_{R}(r)}{r}-2x_{d}^2\right)
\frac{|u(t,x)|^2}{|x|^4}dx}\right|\lesssim \int_{|y|\ge R}{|x|^{-2}|u(t,x)|^{2}dx}\lesssim R^{-2}.
\end{equation}
In view of \eqref{GrindEQ__3_6_}--\eqref{GrindEQ__3_11_}, we have
\begin{equation}\label{GrindEQ__3_12_}
V''_{\phi}(t)\le 8G(u(t))+CR^{-2}+CR^{-b}\int_{|y|\ge R}{|u(t,x)|^{\sigma+2}dx.}
\end{equation}
Using the same argument as in the proof of (4.12) in \cite{DK21}, it follows from \eqref{GrindEQ__3_12_} and the fact $\dot{H}_{c}^1\sim\dot{H}^1$ that
\begin{equation}\label{GrindEQ__3_13_}
V''_{\phi}(t)\le 8G(u(t))+CR^{-2}+\left\{\begin{array}{l} {CR^{-\frac{d-2}{2}-b}\left\|u(t)\right\|_{\dot{H}_{c}^1}^{2},~\textrm{if}\;\sigma=2,}
\\{CR^{-\frac{(d-2)\sigma}{4}-b}\left\|u(t)\right\|_{\dot{H}_{c}^1}^{2}
+CR^{-\frac{(d-2)\sigma}{4}-b},~\textrm{if}\;\sigma<2,} \end{array}\right.
\end{equation}
for all $t\in(-T_{*}, T^{*})$. Lemma 3.6 of \cite{CG21} and \eqref{GrindEQ__3_2_} also show that for some $\eta>0$ and $\epsilon>0$,
\begin{equation}\label{GrindEQ__3_14_}
8G(u(t))+\epsilon \left\|u(t)\right\|_{\dot{H}_{c}^1}^2\le -\eta.
\end{equation}
In view of \eqref{GrindEQ__3_13_} and \eqref{GrindEQ__3_14_}, we take $R>1$ sufficiently large to get
$$
V''_{\phi}(t)\le -\frac{\eta}{2}<0,
$$
for all $t\in(-T_{*}, T^{*})$. This shows that $T_{*},T^{*}<\infty$. This completes the proof.
\end{proof}

\subsection{At the ground state threshold}
In this subsection, we study the global existence and blow-up at the ground state threshold.
\begin{lem}[\cite{CG21}]\label{lem 3.1.}
If $d\ge 3$, $c>-c(d)$, $0<b<2$ and $0<\sigma<\frac{4-2b}{d-2}$, then $H_{c}^{1}$ is compactly embedded in $L^{\sigma+2}\left(|x|^{-b}dx\right)$.
\end{lem}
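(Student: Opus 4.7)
The plan is to first exploit the norm equivalence $\|\cdot\|_{H_c^1}\sim\|\cdot\|_{H^1}$, an immediate consequence of the sharp Hardy inequality \eqref{GrindEQ__1_2_} valid since $c>-c(d)$, in order to reduce the problem to showing that $H^1(\mathbb R^d)$ embeds compactly into the weighted Lebesgue space $L^{\sigma+2}(|x|^{-b}dx)$. Given a bounded sequence $(u_n)$ in $H^1$, I would pass to a subsequence with $u_n\rightharpoonup u$ weakly in $H^1$ and, by the Rellich--Kondrachov theorem, $u_n\to u$ strongly in $L^p_{\mathrm{loc}}(\mathbb R^d)$ for every $p<2^*:=\frac{2d}{d-2}$. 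Setting $v_n:=u_n-u$, the goal is to establish $\int |x|^{-b}|v_n|^{\sigma+2}dx\to 0$.

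Next I would split the integral into the three regions $\{|x|\le\varepsilon\}$, $\{\varepsilon\le|x|\le R\}$, and $\{|x|\ge R\}$. On the exterior region, the pointwise bound $|x|^{-b}\le R^{-b}$ combined with the subcritical Sobolev embedding $H^1\hookrightarrow L^{\sigma+2}$, which holds because $\sigma+2<2^*$ (a consequence of $b>0$), yields
\[
\int_{|x|\ge R}|x|^{-b}|v_n|^{\sigma+2}dx\lesssim R^{-b}.
\]
On the annular region, $|x|^{-b}\le\varepsilon^{-b}$, so
\[
\int_{\varepsilon\le|x|\le R}|x|^{-b}|v_n|^{\sigma+2}dx\le\varepsilon^{-b}\int_{\varepsilon\le|x|\le R}|v_n|^{\sigma+2}dx\longrightarrow 0\quad\textrm{as }n\to\infty
\]
by local $L^{\sigma+2}$-compactness. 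For the singular piece near the origin, I would apply H\"older's inequality with exponents $\alpha$ and $\alpha'=\alpha/(\alpha-1)$:
\[
\int_{|x|\le\varepsilon}|x|^{-b}|v_n|^{\sigma+2}dx\le\Bigl(\int_{|x|\le\varepsilon}|x|^{-b\alpha}dx\Bigr)^{1/\alpha}\|v_n\|_{L^{(\sigma+2)\alpha'}}^{\sigma+2},
\]
which requires $b\alpha<d$ for integrability of the weight near the origin and $(\sigma+2)\alpha'\le 2^*$ in order for the Sobolev embedding to control the second factor uniformly in $n$.

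The admissibility of such an $\alpha$ is the technical heart of the argument: the infimum of $(\sigma+2)\alpha'$ over $\alpha\in(1,d/b)$ equals $(\sigma+2)\cdot d/(d-b)$ (attained as $\alpha\to(d/b)^-$), and the condition $(\sigma+2)\cdot d/(d-b)<2^*$ simplifies exactly to $\sigma<\frac{4-2b}{d-2}$, which is precisely the energy-subcritical hypothesis. Choosing $\alpha$ sufficiently close to $d/b$, this produces
\[
\int_{|x|\le\varepsilon}|x|^{-b}|v_n|^{\sigma+2}dx\lesssim \varepsilon^{(d-b\alpha)/\alpha}
\]
uniformly in $n$. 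Given $\eta>0$, I would first fix $R$ large so that $R^{-b}\lesssim\eta$, then $\varepsilon>0$ small so that $\varepsilon^{(d-b\alpha)/\alpha}\lesssim\eta$, and finally let $n\to\infty$ to drive the middle piece below $\eta$.

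The main obstacle I anticipate lies in the near-origin estimate: the exponent balance is tight and depends crucially on the strict energy-subcritical condition $\sigma<\frac{4-2b}{d-2}$. In the borderline case $\sigma=\frac{4-2b}{d-2}$, the exponent $\alpha$ is pinned at $d/b$ and $(\sigma+2)\alpha'$ coincides with the critical Sobolev exponent $2^*$, at which the embedding $H^1\hookrightarrow L^{2^*}$ is not compact; this reflects the well-known failure of compactness at the Hardy--Sobolev critical threshold and confirms that the subcritical hypothesis is not merely technical.
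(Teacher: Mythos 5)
The paper does not prove this lemma itself; it is quoted directly from \cite{CG21}. Your argument is correct and is essentially the standard proof of this compact embedding: reduce to $H^1$ via $\|\cdot\|_{H_c^1}\sim\|\cdot\|_{H^1}$, use Rellich--Kondrachov on the compact annulus, the decay $|x|^{-b}\le R^{-b}$ at infinity together with the subcritical bound $\sigma+2<\frac{2d}{d-2}$, and H\"older plus Sobolev near the origin, where your exponent computation correctly identifies $\sigma<\frac{4-2b}{d-2}$ as precisely the condition making the singular weight admissible.
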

\begin{lem}\label{lem 3.2.}
Let $d\ge 3$, $c>-c(d)$, $0<b<2$ and $0<\sigma<\frac{4-2b}{d-2}$. Let $(f_{n})_{n\ge1}$ be a sequence of $H_{c}^{1}$ function satisfying
$$
M(f_n)=M_{G},~E(f_n)=E_{G},~\forall n\ge 1
$$
and
$$
\lim_{n\to \infty}\left\|f_n\right\|_{\dot{H}_{c}^{1}}=S_{G}.
$$
Then there exists a subsequence still denoted by $(f_{n})_{n\ge1}$ such that
$$
f_{n}\to e^{i\theta}Q~\textrm{strongly in}~H_{c}^{1}~\textrm{as}~n\to \infty
$$
for some $\theta\in\mathbb R$ and some positive solution $Q$ to the ground state equation \eqref{GrindEQ__1_7_}.
\end{lem}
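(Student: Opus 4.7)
The plan is to produce a weak limit of $(f_n)$ in $H_c^1$, identify it as an optimizer of the sharp Gagliardo--Nirenberg inequality via the mass and energy constraints, upgrade the convergence to strong in $H_c^1$, and finally identify the limit as $e^{i\theta}Q$. Since $\|f_n\|_{L^2}^2=M_G$ and $\|f_n\|_{\dot{H}_c^1}\to S_G$, the sequence is bounded in $H_c^1$ (the norm equivalence $H_c^1\sim H^1$ holds because $c>-c(d)$), so up to a subsequence $f_n\rightharpoonup f$ weakly in $H_c^1$. By Lemma 3.1, the embedding $H_c^1\hookrightarrow L^{\sigma+2}(|x|^{-b}dx)$ is compact, hence $P(f_n)\to P(f)$. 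Passing to the limit in $E(f_n)=\tfrac12\|f_n\|_{\dot{H}_c^1}^2-\tfrac{1}{\sigma+2}P(f_n)=E_G$ gives $\tfrac12 S_G^2-\tfrac{1}{\sigma+2}P(f)=E_G$, and combined with the Pohozaev relations of Remark 2.3 this yields $P(f)=P_G$.

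By weak lower semicontinuity one has $\|f\|_{\dot{H}_c^1}\le S_G$ and $\|f\|_{L^2}^2\le M_G$, so Lemma 2.2 gives
\[P_G=P(f)\le C_{GN}\|f\|_{\dot{H}_c^1}^{\frac{d\sigma+2b}{2}}\|f\|_{L^2}^{\frac{4-2b-(d-2)\sigma}{2}}\le C_{GN}S_G^{\frac{d\sigma+2b}{2}}M_G^{\frac{4-2b-(d-2)\sigma}{4}}=P_G,\]
the last equality being the saturation by $Q$ together with \eqref{GrindEQ__2_4_} and \eqref{GrindEQ__2_3_}. Since both exponents are strictly positive in the intercritical range, equality throughout forces $\|f\|_{\dot{H}_c^1}=S_G$, $\|f\|_{L^2}^2=M_G$, and moreover $f$ itself saturates the sharp Gagliardo--Nirenberg inequality. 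The convergence of the norms combined with weak convergence in the Hilbert space $H_c^1$ then upgrades to strong convergence via the identity $\|f_n-f\|^2=\|f_n\|^2-2\,\mathrm{Re}\langle f_n,f\rangle+\|f\|^2$, applied separately in $L^2$ and in $\dot{H}_c^1$.

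It remains to identify $f$ as $e^{i\theta}Q$. Being a sharp Gagliardo--Nirenberg optimizer with precisely the mass and $\dot{H}_c^1$-norm of a ground state, $f$ satisfies the Euler--Lagrange equation of that functional, which after scaling out the Lagrange multipliers coincides with the ground state equation \eqref{GrindEQ__2_1_}. To handle the phase, I would use the diamagnetic inequality $\||f|\|_{\dot{H}_c^1}\le\|f\|_{\dot{H}_c^1}$, noting that the potential term $c\int|x|^{-2}|\cdot|^{2}$ is insensitive to replacing $f$ by $|f|$: since $|f|$ shares the $L^2$-norm and the $P$-value of $f$, equality of the Gagliardo--Nirenberg ratios forces equality in the diamagnetic bound, which yields $f=e^{i\theta}|f|$ for a constant $\theta\in\mathbb R$. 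Then $|f|$ is a nonnegative, nontrivial $H_c^1$-solution of \eqref{GrindEQ__2_1_}, and hence some positive ground state $Q$. The main obstacle is this last step: while identifying $|f|$ with a nonnegative solution is straightforward from the variational argument, justifying the \emph{constancy} of the phase in the presence of the inverse-square potential and the inhomogeneous weight $|x|^{-b}$, and promoting nonnegativity of $|f|$ to strict positivity away from the origin, both require a careful analysis of the equality case in the diamagnetic inequality on the nodal set together with elliptic regularity for \eqref{GrindEQ__2_1_} off the singularity at $x=0$.
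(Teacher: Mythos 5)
Your proposal is correct and follows essentially the same route as the paper: weak limit plus the compact embedding of Lemma 3.1 to get $P(f)=P_{G}$, weak lower semicontinuity against the sharp Gagliardo--Nirenberg inequality to force $\left\|f\right\|_{\dot{H}_{c}^{1}}=S_{G}$, $\left\|f\right\|_{L^{2}}^{2}=M_{G}$ and hence strong $H_{c}^{1}$ convergence, then the Euler--Lagrange equation and a rescaling (with $\lambda=\mu=1$ fixed by the mass and $\dot{H}_{c}^{1}$-norm) to identify the limit as $e^{i\theta}Q$. The phase-constancy/positivity step you flag as the main obstacle is exactly the point the paper also does not argue in detail, handling it through equality in the diamagnetic inequality and a citation of the argument of Lemma 4.2 of \cite{DK21}.
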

\begin{proof}
Since $(f_{n})_{n\ge 1}$ is a bounded sequence in $H_{c}^{1}$, it follows from Lemma 3.1 that there exists a subsequence still denoted by $(f_{n})_{n\ge 1}$ and a function $f\in H_{c}^{1}$ such that $f_{n}\to f$ weakly in $H_{c}^{1}$ and $P(f_{n})\to P(f)$ as $n\to \infty$. Hence, it follows from \eqref{GrindEQ__2_5_} that
\begin{eqnarray}\begin{split}\nonumber
P(f)=\lim_{n\to \infty}{P(f_{n})}&=\lim_{n\to \infty}{(\sigma+2)\left(\frac{1}{2}\left\|f_{n}\right\|_{\dot{H}_{c}^{1}}^{2}-E(f_{n})\right)}\\
&=(\sigma+2)\left(\frac{1}{2}[S_{G}]^{2}-E_{G}\right)\\
&=\frac{2(\sigma+2)}{d\sigma+2b}[S_{G}]^{2}=P_{G}.
\end{split}\end{eqnarray}
This shows that $f\neq0$. Moreover, using Lemma 2.2, we have
\begin{equation}\label{GrindEQ__3_15_}
P(f)- C_{GN}\left\|f\right\|_{\dot{H}_{c}^{1}}^{\frac{d\sigma+2b}{2}}
\left\|f\right\|_{L^{2}}^{\frac{4-2b-\sigma(d-2)}{2}}\le 0.
\end{equation}
By the lower continuity of weak convergence, we have
$$
\left\|f\right\|_{\dot{H}_{c}^{1}}\le \liminf_{n\to\infty}{\left\|f_{n}\right\|_{\dot{H}_{c}^{1}}}.
$$
Hence, we have
\begin{eqnarray}\begin{split}\label{GrindEQ__3_16_}
P(f)-C_{GN}\left\|f\right\|_{\dot{H}_{c}^{1}}^{\frac{d\sigma+2b}{2}}
\left\|f\right\|_{L^{2}}^{\frac{4-2b-\sigma(d-2)}{2}}
&\ge\liminf_{n\to\infty}{P(f_{n})-C_{GN}\left\|f_{n}\right\|_{\dot{H}_{c}^{1}}^{\frac{d\sigma+2b}{2}}
\left\|f_{n}\right\|_{L^{2}}^{\frac{4-2b-\sigma(d-2)}{2}}}\\
&=P_{G}-C_{GN}[S_{G}]^{\frac{d\sigma+2b}{2}}
[M_{G}]^{\frac{4-2b-\sigma(d-2)}{4}}=0.
\end{split}\end{eqnarray}
In view of \eqref{GrindEQ__3_15_} and \eqref{GrindEQ__3_16_}, we see that $f$ is an optimizer for \eqref{GrindEQ__2_2_}. We also have
$$
\left\|f\right\|_{\dot{H}_{c}^{1}}=\lim_{n\to\infty}{\left\|f_{n}\right\|_{\dot{H}_{c}^{1}}},
$$
which implies that $f_{n}\to f$ strongly in $H_{c}^{1}$. We claim that there exists $\theta\in\mathbb R$ such that $f(x)=e^{i\theta}g(x)$, where $g(x)$ is a non-negative optimizer for \eqref{GrindEQ__2_2_}. Indeed, since $\left|\nabla |f|\right|\le \left|\nabla f\right|$ for all $f\in H_{c}^{1}$, it is clear that $|f|$ is also an optimizer for \eqref{GrindEQ__2_2_} and
\begin{equation}\label{GrindEQ__3_17_}
\left\||f|\right\|_{\dot{H}_{c}^{1}}=\left\|f\right\|_{\dot{H}_{c}^{1}}.
\end{equation}
Using \eqref{GrindEQ__3_17_} and the same argument as in the proof of Lemma 4.2 of \cite{DK21}, the claim follows with $g(x)=|f(x)|$. Since $g$ is an optimizer for \eqref{GrindEQ__2_2_}, Euler--Lagrange equation for $g$ gives
$$
mP_{c}g+ng-\frac{\sigma+2}{C_{GN}}|x|^{-b}|g|^{\sigma}g=0,
$$
where
$$
m=\frac{d\sigma+2b}{2}\left\|f\right\|_{\dot{H}_{c}^{1}}^{\frac{d\sigma+2b-4}{2}}
\left\|f\right\|_{L^{2}}^{\frac{4-2b-(d-2)\sigma}{2}},
$$
$$
n=\frac{4-2b-(d-2)\sigma}{2}\left\|f\right\|_{\dot{H}_{c}^{1}}^{\frac{d\sigma+2b}{2}}
\left\|f\right\|_{L^{2}}^{-\frac{2b+(d-2)\sigma}{2}}.
$$
By a change of variable $g(x)=\lambda Q(\mu x)$ with $\lambda,\mu>0$ satisfying
$$
\mu^{2}=\frac{n}{m}, \lambda^{\sigma}=\frac{nC_{GN}}{\sigma+2}\mu^{-b},
$$
we can see that $Q$ solves \eqref{GrindEQ__1_7_}. As $\left\|g\right\|_{L^2}=\left\|Q\right\|_{L^2}$ and $\left\|g\right\|_{\dot{H}_{c}^1}=\left\|Q\right\|_{\dot{H}_{c}^1}$,
we have $\lambda=\mu=1$. This completes the proof.
\end{proof}
\begin{proof}[\textbf{Proof of Theorem 1.3}] First, we prove Item 1.

\textbf{Proof of Item 1.} Let $u_{0}\in H_{c}^{1}$ satisfy \eqref{GrindEQ__1_19_} and \eqref{GrindEQ__1_20_}. Note that \eqref{GrindEQ__1_19_} and \eqref{GrindEQ__1_20_} are invariant under the scaling
\begin{equation}\label{GrindEQ__3_18_}
u_{0}^{\lambda}(x):=\lambda^{\frac{2-b}{\sigma}}u_{0}(\lambda x),~\lambda>0.
\end{equation}
Hence, by choosing a suitable scaling, we can assume that
\begin{equation}\label{GrindEQ__3_19_}
M(u_{0})=M_{G},~E(u_{0})=E_{G},~\left\|u_{0}\right\|_{\dot{H}_{c}^{1}}<S_{G}.
\end{equation}
We claim that
\begin{equation}\label{GrindEQ__3_20_}
\left\|u(t)\right\|_{\dot{H}_{c}^{1}}<S_{G},
\end{equation}
for all $t\in (-T_{*},T^{*})$. Assume by contradiction that there exists $t_{0}\in (-T_{*},T^{*})$ such that $\left\|u(t_{0})\right\|_{\dot{H}_{c}^{1}}\ge S_{G}$. By continuity argument, there exists $t_{1}\in (-T_{*},T^{*})$ such that  $\left\|u(t_{1})\right\|_{\dot{H}_{c}^{1}}=S_{G}$. It follows from \eqref{GrindEQ__2_5_} and the conservation of energy that
\begin{eqnarray}\begin{split}\nonumber
P(u(t_{1}))&=(\sigma+2)\left(\frac{1}{2}\left\|u(t_{1})\right\|_{\dot{H}_{c}^{1}}^{2}-E(u(t_{1}))\right)\\
&=(\sigma+2)\left(\frac{1}{2}[S_{G}]^{2}-E_{G}\right)\\
&=\frac{2(\sigma+2)}{d\sigma+2b}[S_{G}]^{2}=P_{G}.
\end{split}\end{eqnarray}
Hence, we have
\begin{equation}\nonumber
P(u(t_{1}))-C_{GN}\left\|u(t_{1})\right\|_{\dot{H}_{c}^{1}}^{\frac{d\sigma+2b}{2}}
\left\|u(t_{1})\right\|_{L^{2}}^{\frac{4-2b-\sigma(d-2)}{2}}
=P_{G}-C_{GN}[S_{G}]^{\frac{d\sigma+2b}{2}}
[M_{G}]^{\frac{4-2b-\sigma(d-2)}{4}}=0,
\end{equation}
which shows that $u(t_{1})$ is an optimizer for \eqref{GrindEQ__2_2_}. Using the same argument as in the proof of Lemma 3.2, we have $u(t_{1})=e^{i\theta}Q$ for some $\theta\in \mathbb R$ and some positive solution $Q$ to \eqref{GrindEQ__1_7_}. Moreover, by the uniqueness of solution to \eqref{GrindEQ__1_1_}, we have $u(t)=e^{i(t-t_{1})}e^{i\theta}Q$ which contradicts \eqref{GrindEQ__3_19_}. This shows \eqref{GrindEQ__3_20_} and the solution
exists globally in time. We divide the study in two cases.\\
\textbf{Case 1.1.} Let us consider the case
$$
\sup_{t\in\mathbb R}{\left\|u(t)\right\|_{\dot{H}_{c}^{1}}}<S_{G}.
$$
There exists $\rho>0$ such that
\begin{equation}\label{GrindEQ__3_21_}
\left\|u(t)\right\|_{\dot{H}_{c}^{1}}\le (1-\rho)S_{G},
\end{equation}
for all $t\in\mathbb R$. In view of \eqref{GrindEQ__3_19_} and \eqref{GrindEQ__3_21_}, we have \eqref{GrindEQ__3_2_}.
Repeating the same argument as in the proof of Item 1 of Theorem 1.2, we can prove \eqref{GrindEQ__1_21_}.\\
\textbf{Case 1.2.} Let us consider the case
$$
\sup_{t\in\mathbb R}{\left\|u(t)\right\|_{\dot{H}_{c}^{1}}}=S_{G}.
$$ Then there exists
a time sequence $(t_{n})_{n\ge1}$ such that
\begin{equation}\nonumber
M(u(t_{n}))=M_{G},~E(u(t_{n}))=E_{G},~
\lim_{n\to\infty}\left\|u(t_{n})\right\|_{\dot{H}_{c}^{1}}=S_{G}.
\end{equation}
We claim that $|t_{n}|\to \infty$. Assume by contradiction that there exists subsequence still denoted by $(t_{n})_{n\ge1}$ and $t_{0}\in \mathbb R$ such that $t_{n}\to t_{0}$ as $n\to \infty$. By continuity of solution, $u(t_{n})\to u(t_{0})$ strongly in $H^{1}_{c}$.
This shows that $\left\|u(t_{0})\right\|_{\dot{H}_{c}^{1}}=S_{G}$. Using the same argument above, we see that $u(t_{0})$ is an optimizer for \eqref{GrindEQ__2_2_} which is a contradiction.
Hence, it follows from Lemma 3.2 that there exists a subsequence still denoted by $(t_{n})_{n\ge1}$ such that
$$
u(t_{n})\to e^{i\theta}Q~\textrm{strongly in}~H_{c}^{1}~\textrm{as}~n\to \infty,
$$
for some $\theta\in\mathbb R$ and some positive solution $Q$ to  \eqref{GrindEQ__1_7_}.

\textbf{Proof of Item 2.} Let $u_{0}$ satisfy \eqref{GrindEQ__1_19_} and \eqref{GrindEQ__1_23_}. By scaling \eqref{GrindEQ__3_18_}, we can assume that
\begin{equation}\nonumber
M(u_{0})=M_{G},~E(u_{0})=E_{G},~\left\|u_{0}\right\|_{\dot{H}_{c}^{1}}=S_{G}.
\end{equation}
This shows that $u_{0}$ is optimizer for \eqref{GrindEQ__2_2_} which implies that $u_{0}=e^{i\theta}Q(x)$ for some $\theta\in \mathbb R$ and some positive solution $Q$ to \eqref{GrindEQ__1_7_}. By uniqueness of solution to \eqref{GrindEQ__1_1_}, we have $u(t)=e^{it}e^{i\theta}Q(x)$.

\textbf{Proof of Item 3.} Let $u_{0}$ satisfy \eqref{GrindEQ__1_19_} and \eqref{GrindEQ__1_24_}. By scaling (3.18), we can assume that
\begin{equation}\label{GrindEQ__3_22_}
M(u_{0})=M_{G},~E(u_{0})=E_{G},~\left\|u_{0}\right\|_{\dot{H}_{c}^{1}}>S_{G}.
\end{equation}
Using the same argument as in the proof of Item 1, we can prove that
\begin{equation}\nonumber
\left\|u(t)\right\|_{\dot{H}_{c}^{1}}>S_{G},
\end{equation}
for all $t\in (-T_{*},T^{*})$. We only consider the positive times. The negative time is treated similarly. If $T^{*}<\infty$, then we are done. If $T^{*}=\infty$, we have two possible cases.\\
\textbf{Case 3.1.} Let us consider the case
$$
\sup_{t\in[0,\infty)}{\left\|u(t)\right\|_{\dot{H}_{c}^{1}}}>S_{G}.
$$
Then there exists $\delta>0$ such that
\begin{equation}\label{GrindEQ__3_23_}
\left\|u(t)\right\|_{\dot{H}_{c}^{1}}\ge (1+\delta)S_{G}
\end{equation}
for all $t\in[0,\infty)$.
It follows from \eqref{GrindEQ__2_6_}, \eqref{GrindEQ__3_22_}, \eqref{GrindEQ__3_23_} and the conservation of mass and energy, we have
\begin{eqnarray}\begin{split}\nonumber
G(u(t))[M(u(t))]^{\sigma_{c}}&=\frac{d\sigma+2b}{2}E(u(t))[M(u(t))]^{\sigma_{c}}-\frac{d\sigma-4+2b}{4}
\left(\left\|u(t)\right\|_{\dot{H}_{c}^{1}}\left\|u(t)\right\|_{L^{2}}^{\sigma_{c}}\right)^2\\
&\le \frac{d\sigma+2b}{2}H_{G}-\frac{d\sigma-4+2b}{4}
\left((1+\delta)K_{G}\right)^2\\
&=-\frac{d\sigma-4+2b}{4}\left((1+\delta)^2-1\right)[K_{G}]^2,
\end{split}\end{eqnarray}
for all $t\in [0,\infty)$. By Theorem 1.1, there exists a time sequence $t_{n}\to \infty$ such that $\left\|u(t_{n})\right\|_{\dot{H}_{c}^{1}}\to \infty$ as $n\to \infty$.\\
\textbf{Case 3.2.} Let us consider the case
$$
\sup_{t\in[0,\infty)}{\left\|u(t)\right\|_{\dot{H}_{c}^{1}}}=S_{G}.
$$
Then there exists a time sequence $(t_{n})_{n\ge 1}$ such that $\left\|u(t_{n})\right\|_{\dot{H}_{c}^{1}}\to S_{G}$ as $n\to\infty$. Using the same argument as in Case 1.2, we can see that there exists a subsequence still denoted by $(t_{n})_{n\ge1}$ such that $t_{n}\to \infty$ and
$$
u(t_{n})\to e^{i\theta}Q~\textrm{strongly in}~H_{c}^{1}~\textrm{as}~n\to \infty
$$
for some $\theta\in\mathbb R$ and some positive solution $Q$ to  \eqref{GrindEQ__1_7_}. This completes the proof of the first part of Item 3. Let us prove the second part.
\begin{itemize}
\item \textbf{Finite variance data.} If $|x|u_{0}\in L^{2}$, then Case 3.1 is excluded. Indeed, if Case 3.1 occurs, we have
      $$
      G(u(t))<-\eta,~\forall t\in [0,\infty)
      $$
      for some $\eta>0$. Then Lemma 2.6 shows that
      $$
      \frac{d^2}{dt^2}\left\|xu(t)\right\|_{L^{2}}^{2}=8G(u(t))<-8\eta<0,
      $$
      which contradicts $T^{*}=\infty$.
\item \textbf{Radially symmetric data.} If Case 3.1 occurs, we have \eqref{GrindEQ__3_23_}. It follows from \eqref{GrindEQ__2_6_}, \eqref{GrindEQ__3_22_}, \eqref{GrindEQ__3_23_} and the conservation of mass and energy that
      \begin{eqnarray}\begin{split}\nonumber
      &[M(u(t))]^{\sigma_{c}}\left(8G(u(t))+\varepsilon\left\|u(t)\right\|_{\dot{H}_{c}^1}\right)\\
      &~~~~~~~~~~=4(d\sigma+2b)E(u(t))[M(u(t))]^{\sigma_{c}}
      -(2d\sigma-8+4b-\varepsilon)   \left\|u(t)\right\|_{\dot{H}_{c}^{1}}^{2}[M(u(t))]^{\sigma_{c}}\\
      &~~~~~~~~~~\le 4(d\sigma+2b)H_{G}-(2d\sigma-8+4b-\varepsilon)
      \left((1+\delta)K_{G}\right)^2\\
      &~~~~~~~~~~=-2(d\sigma-4+2b)
      \left((1+\delta)K_{G}\right)^2
      \left[\frac{(1+\delta)^2-1}{(1+\delta)^2}-\frac{\varepsilon}{2(d\sigma-4+2b)}\right],
      \end{split}\end{eqnarray}
      for all $t\in [0,\infty)$. Taking $\varepsilon>0$ sufficiently small, there exists $\eta>0$ such that
      \begin{equation}\label{GrindEQ__3_24_}
      8G(u(t))+\varepsilon \left\|u(t)\right\|_{\dot{H}_{c}^{1}}\le -\eta,
      \end{equation}
      for all $t\in [0,\infty)$. On the other hand, Lemma 4.6 of \cite{JAK21} shows that
      \begin{equation} \label{GrindEQ__3_25_}
      V''_{\varphi_{R}}(t)\le 8 G(u(t))+O\left(R^{-2}+\varepsilon^{-\frac{\sigma}{4-\sigma}}R^{-{\frac{2\left[(d-1)\sigma+2b\right]}
      {4-\sigma}}}+\varepsilon\left\|u(t)\right\| _{\dot{H}^{1}_{c}}^{2}\right),
      \end{equation}
      for any $\varepsilon>0$ and any $R>1$. Taking $\varepsilon>0$ small enough and $R>1$ large enough, it follows from \eqref{GrindEQ__3_23_} and \eqref{GrindEQ__3_24_} that
      $$
      V''_{\varphi_{R}}(t)\le -\frac{\eta}{2},
      $$
      for all $t\in [0,\infty)$. This is also impossible.

  \item \textbf{Cylindrically symmetric data.} Using \eqref{GrindEQ__3_13_}, \eqref{GrindEQ__3_24_} and repeating the same argument above, we can easily prove that Case 3.1 is excluded. This completes the proof.
  \end{itemize}
\end{proof}
\subsection{Above the ground state threshold}
In this subsection, we study the global existence and blow-up above the ground state threshold.
\begin{lem}[\cite{W83}]\label{lem 3.3.}
Let $u\in H^{1}$ and $|x|u\in L^{2}$. Then we have
\begin{equation}\nonumber
\left\|u\right\|_{2}^{2}\le\frac{2}{d}\left\||x|u\right\|_{2}\left\|\nabla u\right\|_{2}.
\end{equation}
\end{lem}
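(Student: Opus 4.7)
The plan is to prove this classical Heisenberg-type uncertainty inequality by an integration-by-parts identity followed by Cauchy--Schwarz. The key observation is that $\nabla\cdot x=d$ on $\mathbb R^{d}$, so one can rewrite $d\|u\|_{2}^{2}$ as the integral of a divergence applied to $|u|^{2}$, and then integrate by parts to transfer the derivative onto $u$.

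More precisely, first I would take $u\in C_{c}^{\infty}(\mathbb R^{d})$ (the general case then follows by a standard density argument, since $C_{c}^{\infty}$ is dense in the weighted space $\{u\in H^{1}:|x|u\in L^{2}\}$ and both sides of the inequality are continuous under this density). Using $\nabla\cdot x=d$ together with integration by parts with no boundary contribution, I would write
\begin{equation}\nonumber
d\|u\|_{2}^{2}=\int_{\mathbb R^{d}}(\nabla\cdot x)|u(x)|^{2}\,dx=-\int_{\mathbb R^{d}}x\cdot\nabla(|u(x)|^{2})\,dx.
\end{equation}
Since $\nabla(|u|^{2})=2\,\textnormal{Re}(\bar{u}\nabla u)$, this becomes
\begin{equation}\nonumber
d\|u\|_{2}^{2}=-2\,\textnormal{Re}\int_{\mathbb R^{d}}\bar{u}(x)\,x\cdot\nabla u(x)\,dx.
\end{equation}

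Then I would apply the triangle inequality and Cauchy--Schwarz in $L^{2}$, bounding $|x\cdot\nabla u|\le|x|\,|\nabla u|$ pointwise:
\begin{equation}\nonumber
d\|u\|_{2}^{2}\le 2\int_{\mathbb R^{d}}|x|\,|u(x)|\,|\nabla u(x)|\,dx\le 2\,\bigl\||x|u\bigr\|_{2}\,\|\nabla u\|_{2},
\end{equation}
which immediately gives the desired inequality $\|u\|_{2}^{2}\le\frac{2}{d}\||x|u\|_{2}\|\nabla u\|_{2}$ after dividing by $d$.

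There is no real obstacle here: the only subtle point is the density/approximation step to pass from $C_{c}^{\infty}$ to the stated space, and this is routine because if $u_{n}\to u$ with $u_{n}\to u$ in $H^{1}$ and $|x|u_{n}\to|x|u$ in $L^{2}$, then all three norms appearing in the inequality pass to the limit. The inequality is sharp when $u$ is a Gaussian, which corresponds to the equality case in Cauchy--Schwarz (i.e.\ when $\nabla u$ is proportional to $xu$), but sharpness is not required for the lemma as stated.
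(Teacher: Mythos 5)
Your proof is correct: the identity $d\|u\|_{2}^{2}=-2\,\textnormal{Re}\int\bar{u}\,x\cdot\nabla u\,dx$ followed by Cauchy--Schwarz, together with the routine density step, is the standard argument for this uncertainty-type inequality. The paper itself does not prove the lemma but simply cites Weinstein \cite{W83}, and your argument is precisely the classical proof behind that citation, so there is nothing to add.
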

\begin{lem}\label{lem 3.4.}
Let $d\ge 3$, $0<b<2$, $\frac{4-2b}{d}<\sigma<\frac{4-2b}{d-2}$ and $c>-c(d)$. Assume $u\in H_{c}^{1}$ and $|x|u\in L^{2}$. Then we have
\begin{equation}\label{GrindEQ__3_26_}
\left[\textnormal{Im}\int{x\cdot\bar{u}\nabla udx}\right]^{2}\le\int{|x|^{2}|u|^{2}dx}\left[\left\| u\right\|_{\dot{H}_{c}^{1}}^{2}-\frac{[P(u)]^{\frac{4}{d\sigma+2b}}}
{C_{GN}^{\frac{4}{d\sigma+2b}}[M(u)]^{\frac{4-2b-(d-2)\sigma}{d\sigma+2b}}}\right].
\end{equation}
\end{lem}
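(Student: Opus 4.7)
The plan is to apply a pseudo\-conformal (phase\-modulation) trick: introduce the one\-parameter family $v_{\lambda}(x):=e^{i\lambda|x|^{2}/2}u(x)$ for $\lambda\in\mathbb R$, apply the sharp Gagliardo--Nirenberg inequality (Lemma 2.2) to $v_{\lambda}$, and exploit the fact that the resulting estimate is a nonnegative quadratic polynomial in $\lambda$, whose discriminant condition yields exactly \eqref{GrindEQ__3_26_}.

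First I would observe that the modulation is an isometry for both the $L^{2}$ norm and the nonlinear functional $P$: since $|v_{\lambda}|=|u|$, we have $M(v_{\lambda})=M(u)$, $P(v_{\lambda})=P(u)$, and $\int|x|^{-2}|v_{\lambda}|^{2}dx=\int|x|^{-2}|u|^{2}dx$. A direct computation of $\nabla v_{\lambda}=e^{i\lambda|x|^{2}/2}(i\lambda x\,u+\nabla u)$ gives
\begin{equation}\nonumber
\|\nabla v_{\lambda}\|_{L^{2}}^{2}=\lambda^{2}\int|x|^{2}|u|^{2}dx+2\lambda\int x\cdot\textnormal{Im}(\bar{u}\nabla u)\,dx+\|\nabla u\|_{L^{2}}^{2},
\end{equation}
so, combining with the $c|x|^{-2}$ term,
\begin{equation}\nonumber
\|v_{\lambda}\|_{\dot{H}_{c}^{1}}^{2}=\lambda^{2}\int|x|^{2}|u|^{2}dx+2\lambda\int x\cdot\textnormal{Im}(\bar{u}\nabla u)\,dx+\|u\|_{\dot{H}_{c}^{1}}^{2}.
\end{equation}
This is a quadratic in $\lambda$ with nonnegative leading coefficient.

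Next I apply Lemma 2.2 to $v_{\lambda}\in H_{c}^{1}$ (which uses the hypothesis $|x|u\in L^{2}$ to ensure $v_{\lambda}$ is well in $H_{c}^{1}$), and raise both sides to the power $4/(d\sigma+2b)$ to obtain, for every $\lambda\in\mathbb R$,
\begin{equation}\nonumber
\|v_{\lambda}\|_{\dot{H}_{c}^{1}}^{2}\ge\frac{[P(u)]^{\frac{4}{d\sigma+2b}}}{C_{GN}^{\frac{4}{d\sigma+2b}}\,[M(u)]^{\frac{4-2b-(d-2)\sigma}{d\sigma+2b}}}.
\end{equation}
Combining the last two displays, the polynomial
\begin{equation}\nonumber
p(\lambda):=\lambda^{2}\int|x|^{2}|u|^{2}dx+2\lambda\int x\cdot\textnormal{Im}(\bar{u}\nabla u)\,dx+\|u\|_{\dot{H}_{c}^{1}}^{2}-\frac{[P(u)]^{\frac{4}{d\sigma+2b}}}{C_{GN}^{\frac{4}{d\sigma+2b}}\,[M(u)]^{\frac{4-2b-(d-2)\sigma}{d\sigma+2b}}}
\end{equation}
is nonnegative for every $\lambda\in\mathbb R$. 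Since the leading coefficient $\int|x|^{2}|u|^{2}dx\ge0$, the discriminant of $p$ must be nonpositive, which after division by $4$ is exactly the asserted inequality \eqref{GrindEQ__3_26_}.

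The step most deserving of care is the bookkeeping in the Gagliardo--Nirenberg exponents: one must verify that the power $4/(d\sigma+2b)$ and the exponent $(4-2b-(d-2)\sigma)/(d\sigma+2b)$ on $M(u)$ match the ones on the right\-hand side of \eqref{GrindEQ__3_26_}; a degenerate case $\int|x|^{2}|u|^{2}dx=0$ forces $u\equiv0$ and the inequality becomes trivial, so no separate treatment is needed. No deeper obstruction arises because the inverse\-square term and the $L^{2}$ norm are both invariant under the modulation, which is precisely what makes this scheme compatible with the operator $P_{c}$.
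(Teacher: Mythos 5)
Your proposal is correct and follows essentially the same argument as the paper: modulate by a quadratic phase (the paper uses $e^{i\lambda|x|^{2}}u$, you use $e^{i\lambda|x|^{2}/2}u$, which only rescales $\lambda$), apply the sharp Gagliardo--Nirenberg inequality of Lemma 2.2 to the modulated function, and read off \eqref{GrindEQ__3_26_} from the nonpositivity of the discriminant of the resulting quadratic in $\lambda$. The exponent bookkeeping and the degenerate case $\int|x|^{2}|u|^{2}dx=0$ are handled correctly, so nothing is missing.
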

\begin{proof}
Noticing
$$
\int{|\nabla(e^{i\lambda|x|^{2}}u)|^{2}dx}=4\lambda^{2}\int{|x|^{2}|u|^{2}dx}+4\lambda
\textnormal{Im}\int{x\cdot\bar{u}\nabla udx}+\int{|\nabla u|^{2}dx},
$$
we have
\begin{equation}\label{GrindEQ__3_27_}
\left\|e^{i\lambda|x|^{2}}u\right\|_{\dot{H}_{c}^{1}}^{2}=4\lambda^{2}\int{|x|^{2}|u|^{2}dx}+4\lambda
\textnormal{Im}\int{x\cdot\bar{u}\nabla udx}+\left\|u\right\|_{\dot{H}_{c}^{1}}^{2}.
\end{equation}
On the other hand, Lemma 2.2 (sharp Gagliardo-Nirenberg inequality) implies that
\begin{equation}\label{GrindEQ__3_28_}
[P(u)]^{\frac{4}{d\sigma+2b}}=[P(e^{i\lambda|x|^{2}}u)]^{\frac{4}{d\sigma+2b}}\le C_{GN}^{\frac{4}{d\sigma+2b}}\left\|e^{i\lambda|x|^{2}}u\right\|_{\dot{H}_{c}^{1}}^{2}
\left\|u\right\|_{L^{2}}^{\frac{2[4-2b-(d-2)\sigma]}{d\sigma+2b}}
\end{equation}
It follows from \eqref{GrindEQ__3_27_} and \eqref{GrindEQ__3_28_} that
$$
4\lambda^2\int{|x|^{2}|u|^{2}dx}+4\lambda\textnormal{Im}\int{x\cdot\bar{u}\nabla{u}dx}
+\left\|u\right\|_{\dot{H}_{c}^{1}}^{2}
\ge\frac{[P(u)]^{\frac{4}{d\sigma+2b}}}
{C_{GN}^{\frac{4}{d\sigma+2b}}[M(u)]^{\frac{4-2b-(d-2)\sigma}{d\sigma+2b}}},
$$
for $\lambda\in\mathbb R$, where the left-hand side is a polynomial in $\lambda$. The discriminant of this polynomial in $\lambda$ must be negative or null, which yields the conclusion of this lemma.
\end{proof}
\begin{proof}[\textbf{Proof of Theorem 1.4.}] Let $u_{0}\in H_{c}^{1}$ be such that $|x|u_{0}\in L^2$. Assume further that $u_{0}$ satisfies \eqref{GrindEQ__1_26_} and \eqref{GrindEQ__1_27_}. Using Lemma 2.6, we have
\begin{equation}\label{GrindEQ__3_29_}
\left\|u(t)\right\|_{\dot{H}_{c}^{1}}^{2}=\frac{4(d\sigma+2b)E(u(t))-V''(t)}{2(d\sigma+2b-4)},
\end{equation}
\begin{equation}\label{GrindEQ__3_30_}
P(u(t))=\frac{(16E(u(t))-V''(t))(\sigma+2)}{4(d\sigma+2b-4)}.
\end{equation}
\eqref{GrindEQ__3_30_} yields that $V''(t)\le 16 E(u(t))=16 E(u_{0})$ for all $t$. Using Lemma 2.4, Lemma 3.4, \eqref{GrindEQ__3_29_}, \eqref{GrindEQ__3_30_} and the conservations of mass and energy, we immediately have
\begin{equation}\label{GrindEQ__3_31_}
(V'(t))^{2}\le 16 V(t)\left[\frac{4(d\sigma+2b)E(u_{0})-V''(t)}{2d\sigma+4b-8}-
\frac{\left(\frac{(\sigma+2)(16E(u_{0})-V''(t))}{4d\sigma+8b-16}\right)^{\frac{4}{d\sigma+2b}}}
{C_{GN}^{\frac{4}{d\sigma+2b}}[M(u_{0})]^{\frac{4-2b-(d-2)\sigma}{d\sigma+2b}}}
\right].
\end{equation}
Putting $z(t):=\sqrt{V(t)}$, it follows from \eqref{GrindEQ__3_31_} that
\begin{equation}\label{GrindEQ__3_32_}
(z'(t))^{2}\le4f(V''(t)),
\end{equation}
where
\begin{equation}\label{GrindEQ__3_33_}
f(x)=-\frac{x}{2d\sigma+4b-8}+\frac{(2d\sigma+4b)E(u_{0})}{d\sigma+2b-4}
-\frac{\left(\frac{(\sigma+2)(16E(u_{0})-x)}{4d\sigma+8b-16}\right)^{\frac{4}{d\sigma+2b}}}
{C_{GN}^{\frac{4}{d\sigma+2b}}[M(u_{0})]^{\frac{4-2b-(d-2)\sigma}{d\sigma+2b}}}
\end{equation}
with $x\le 16E(u_{0})$.
An easy computation shows that
$$
f'(x)=-\frac{1}{2d\sigma+4b-8}
+\frac{\frac{4}{d\sigma+2b}\left(\frac{\sigma+2}{4d\sigma+8b-16}\right)^{\frac{4}{d\sigma+2b}}
\left(16E(u_{0})-x\right)^{\frac{4-d\sigma-2b}{d\sigma+2b}}}
{C_{GN}^{\frac{4}{d\sigma+2b}}[M(u_{0})]^{\frac{4-2b-(d-2)\sigma}{d\sigma+2b}}}.
$$
Since $\sigma>\frac{4-2b}{d}$, $f(x)$ is decreasing on $(-\infty,x_{0})$, increasing on $(x_{0},16E(u_{0})]$, where $x_{0}$ satisfies
\begin{equation}\label{GrindEQ__3_34_}
\frac{1}{2d\sigma+4b-8}=
\frac{\frac{4}{d\sigma+2b}\left(\frac{\sigma+2}{4d\sigma+8b-16}\right)^{\frac{4}{d\sigma+2b}}
\left(16E(u_{0})-x_{0}\right)^{\frac{4-d\sigma-2b}{d\sigma+2b}}}
{C_{GN}^{\frac{4}{d\sigma+2b}}[M(u_{0})]^{\frac{4-2b-(d-2)\sigma}{d\sigma+2b}}}.
\end{equation}
In view of \eqref{GrindEQ__3_33_} and \eqref{GrindEQ__3_34_}, we also have $f(x_{0})=\frac{x_{0}}{8}$.
Using \eqref{GrindEQ__2_4_}, \eqref{GrindEQ__2_6_} and \eqref{GrindEQ__3_34_}, we also have
\begin{equation}\label{GrindEQ__3_35_}
\left[\frac{M_{G}}{M(u_{0})}\right]^{\sigma_{c}}
\frac{16E_{G}}{16E(u_{0})-x_{0}}=1.
\end{equation}
Hence, \eqref{GrindEQ__1_26_} is equivalent to
\begin{equation}\label{GrindEQ__3_36_}
x_{0}\ge0.
\end{equation}
Noticing
\begin{equation}\nonumber
(z'(t))^{2}=\frac{(V'(t))^{2}}{4V(t)},
\end{equation}
and using \eqref{GrindEQ__3_35_}, we can see that \eqref{GrindEQ__1_27_} is equivalent to
\begin{equation}\label{GrindEQ__3_37_}
(z'(0))^{2}\ge4f(x_{0})=\frac{x_{0}}{2}.
\end{equation}

\textbf{Proof of Item 1.} Let $u_{0}$ satisfy \eqref{GrindEQ__1_28_} and \eqref{GrindEQ__1_29_}. The assumption \eqref{GrindEQ__1_29_} means that
\begin{equation}\label{GrindEQ__3_38_}
z'(0)\ge0.
\end{equation}
Using \eqref{GrindEQ__2_5_} and \eqref{GrindEQ__3_35_}, we can see that the assumption \eqref{GrindEQ__1_28_} is equivalent to
\begin{equation}\label{GrindEQ__3_39_}
\left[\frac{M_{G}}{M(u_{0})}\right]^{\sigma_{c}}
\frac{4(\sigma+2)E_{G}}
{(d\sigma+2b-4)P(u_{0})}>1=
\left[\frac{M_{G}}{M(u_{0})}\right]^{\sigma_{c}}
\frac{16E_{G}}{16E(u_{0})-x_{0}}.
\end{equation}
Using \eqref{GrindEQ__3_30_}, we can see that \eqref{GrindEQ__3_39_} is equivalent to
\begin{equation}\label{GrindEQ__3_40_}
V''(0)>x_{0}.
\end{equation}
By \eqref{GrindEQ__3_40_}, we can take $\delta_{1}>0$ such that
$$
V''(0)\ge x_{0}+2\delta_{1}.
$$
By continuity argument, we have
\begin{equation}\label{GrindEQ__3_41_}
V''(t)> x_{0}+\delta_{1},~\forall t\in [0,t_{1}),
\end{equation}
for all $t_{1}>0$ sufficiently small.
By reducing $t_{1}$ if necessary, we can assume that
\begin{equation}\label{GrindEQ__3_42_}
z'(t_{1})>2\sqrt{f(x_{0})}.
\end{equation}
In fact, using \eqref{GrindEQ__3_37_} and \eqref{GrindEQ__3_38_}, we have $z'(0)\ge 2\sqrt{f(x_{0})}$. If the inequality is strict, then \eqref{GrindEQ__3_42_} follows from the continuity argument. If $z'(0)=2\sqrt{f(x_{0})}$, it follows from \eqref{GrindEQ__3_40_} that
$$
z''(0)=\frac{1}{z(0)}\left[\frac{V''(0)}{2}-(z'(0))^{2}\right]>0.
$$
Hence, we can take $t_{1}>0$ small enough such that \eqref{GrindEQ__3_42_} holds. We then choose $\varepsilon \in (0,1)$ small enough such that
\begin{equation}\nonumber
z'(t_{1})\ge 2\sqrt{f(x_{0})}+2\varepsilon.
\end{equation}
We will prove by contradiction that
\begin{equation}\label{GrindEQ__3_43_}
z'(t)> 2\sqrt{f(x_{0})}+\varepsilon,  ~ \forall t\ge t_{1}.
\end{equation}
If \eqref{GrindEQ__3_43_} does not hold, there exists $t_{2}$ satisfying
$$
t_{2}=\inf\left\{t\ge t_{1}:~z'(t)\le 2\sqrt{f(x_{0})}+\varepsilon\right\}.
$$
By continuity of $z'(t)$, we have
\begin{equation}\label{GrindEQ__3_44_}
z'(t_{2})= 2\sqrt{f(x_{0})}+\varepsilon,
\end{equation}
and
\begin{equation}\label{GrindEQ__3_45_}
z'(t)\ge 2\sqrt{f(x_{0})}+\varepsilon,~~\forall t\in[t_{1},t_{2}].
\end{equation}
Using \eqref{GrindEQ__3_45_} and \eqref{GrindEQ__3_32_}, we immediately have
$$
(2\sqrt{f(x_{0})}+\varepsilon)^{2}\le(z'(t))^{2}\le 4f(V''(t)),~~\forall t\in[t_{1},t_{2}],
$$
which implies that $f(V''(t))>f(x_{0})$ for all $t\in [t_{1},t_{2}]$. This means that $V''(t)\neq x_{0}$ for all $t\in [t_{1},t_{2}]$. Using \eqref{GrindEQ__3_40_} and the continuity of $V''(t)$, we have $V''(t)>x_{0}$ for all $t\in [t_{1},t_{2}]$.
We claim that there exists a constant $C>0$ (which is independent of $\varepsilon$ and $t$) such that
\begin{equation}\label{GrindEQ__3_46_}
V''(t)\ge x_{0}+\frac{\sqrt{\varepsilon}}{C},~~\forall t\in [t_{1},t_{2}].
\end{equation}
In fact, by the Taylor expansion of $f$ around $x=x_{0}$, we get
\begin{equation}\label{GrindEQ__3_47_}
x_{0}<x<x_{0}+\frac{16E(u_{0})-x_{0}}{2} \Rightarrow f(x)\le f(x_{0})+\frac{M}{2} (x-x_{0})^{2},
\end{equation}
where $M:=\max\left\{f''(x):x\in \left[x_{0}, \frac{16E(u_{0})+x_{0}}{2}\right]\right\}$. It is obvious that $M>0$. If $V''(t)\ge x_{0}+\frac{16E(u_{0})-x_{0}}{2}$, then we take $C=C_{1}:=\frac{2}{16E(u_{0})-x_{0}}$. If $x_{0}<V''(t)<x_{0}+\frac{16E(u_{0})-x_{0}}{2}$, then it follows from \eqref{GrindEQ__3_32_}, \eqref{GrindEQ__3_45_} and \eqref{GrindEQ__3_47_} that
$$
(2\sqrt{f(x_{0})}+\varepsilon)^{2}\le(z'(t))^{2}\le 4f(V''(t))\le 4f(x_{0})+2M(V''(t)-x_{0}),
$$
which implies that
$$
4\sqrt{f(x_{0})}\varepsilon+\varepsilon^{2}<2M(V''(t)-x_{0})^{2}.
$$
Hence, we can take $C=C_{2}:=\sqrt{M}(4f(x_{0}))^{-\frac{1}{4}}$. Therefore, taking $C:=\max\left\{C_{1},C_{2}\right\}$, we have \eqref{GrindEQ__3_46_} for all $t\in [t_{1},t_{2}]$.
In view of \eqref{GrindEQ__3_44_} and \eqref{GrindEQ__3_46_}, we have
\begin{eqnarray}\begin{split}\label{GrindEQ__3_48_}
z''(t_{2})&=\frac{1}{z(t_{2})}\left[\frac{V''(t_{2})}{2}-(z'(t_{2}))^{2}\right]\\
&\ge\frac{1}{z(t_{2})}\left[\frac{x_{0}}{2}+\frac{\sqrt{\varepsilon}}{2C}
-(2\sqrt{f(x_{0})}+\varepsilon)^{2}\right]\\
&\ge \frac{1}{z(t_{2})}\left[\frac{\sqrt{\varepsilon}}{2C}
-4\varepsilon\sqrt{f(x_{0})}-\varepsilon^{2}\right].
\end{split}\end{eqnarray}
Taking $\varepsilon >0$ small enough in \eqref{GrindEQ__3_48_}, we have $z''(t_{2})>0$, which is contrary to \eqref{GrindEQ__3_44_} and \eqref{GrindEQ__3_45_}. This completes the proof of \eqref{GrindEQ__3_43_}. Using \eqref{GrindEQ__3_43_} and the same argument as in the proof of \eqref{GrindEQ__3_46_}, we can prove
\begin{equation}\label{GrindEQ__3_49_}
V''(t)\ge x_{0}+\frac{\sqrt{\varepsilon}}{C},~~\forall t\ge t_{1}.
\end{equation}
In view of \eqref{GrindEQ__3_41_} and \eqref{GrindEQ__3_49_}, we have
\begin{equation}\label{GrindEQ__3_50_}
V''(t)\ge x_{0}+\delta,~~\forall t\ge 0,
\end{equation}
where $\delta=\min\left\{\delta_{1}, \frac{\sqrt{\varepsilon}}{C}\right\}>0$.
Using \eqref{GrindEQ__3_30_}, \eqref{GrindEQ__3_50_}, \eqref{GrindEQ__3_35_},  \eqref{GrindEQ__2_5_} and the conservations of mass and energy, we have for any $t>0$
\begin{eqnarray}\begin{split}\label{GrindEQ__3_51_}
P(u(t))[M(u(t))]^{\sigma_{c}}&=
[M(u_{0})]^{\sigma_{c}}\frac{(16E(u_{0})-V''(t))(\sigma+2)}{4(d\sigma+2b)-16}\\
&\le [M(u_{0})]^{\sigma_{c}}\frac{(16E(u_{0})-x_{0}-\delta)(\sigma+2)}
{4(d\sigma+2b)-16}\\
&=\frac{4(\sigma+2)}{d\sigma-4+2b}[M_{G}]^{\sigma_{c}}E_{G}-\frac{\sigma+2}{4(d\sigma-4+2b)}\delta [M(u_{0})]^{\sigma_{c}}\\
&=(1-\rho)P_{G}[M_{G}]^{\sigma_{c}},
\end{split}\end{eqnarray}
where
$$
\rho:=\frac{\sigma+2}{4(d\sigma-4+2b)}\frac{[M(u_{0})]^{\sigma_{c}}}{P_{G}[M_{G}]^{\sigma_{c}}}\delta>0.
$$
This shows \eqref{GrindEQ__1_9_}, and thus the solution exists globally in time.

\textbf{Proof of Item 2.} Let $u_{0}$ satisfy \eqref{GrindEQ__1_30_} and \eqref{GrindEQ__1_31_}.
Using the same argument as in the proof of \eqref{GrindEQ__3_38_} and \eqref{GrindEQ__3_40_}, we can prove that the assumptions \eqref{GrindEQ__1_31_} and \eqref{GrindEQ__1_30_} are respectively equivalent to
\begin{equation}\label{GrindEQ__3_52_}
z'(0)\le0,
\end{equation}
\begin{equation}\label{GrindEQ__3_53_}
V''(0)<x_{0}.
\end{equation}
We will prove by contradiction that
\begin{equation}\label{GrindEQ__3_54_}
z''(t)<0,
\end{equation}
for any $t$ in the existence time. In fact, it follows from \eqref{GrindEQ__3_37_} and \eqref{GrindEQ__3_53_} that
\begin{equation}\label{GrindEQ__3_55_}
z''(0)=\frac{1}{z(0)}\left[\frac{V''(0)}{2}-(z'(0))^{2}\right]
<\frac{1}{z(0)}\left(\frac{x_{0}}{2}-\frac{x_{0}}{2}\right)=0.
\end{equation}
Assume that \eqref{GrindEQ__3_54_} does not hold. Then there exists $t_{0}>0$ in the existence time such that
$$
z''(t_{0})=0~~\textnormal{and}~~z''(t)<0,~~\forall t\in[0,t_{0}).
$$
In view of \eqref{GrindEQ__3_37_} and \eqref{GrindEQ__3_52_}, we have
\begin{equation}\label{GrindEQ__3_56_}
z'(t)<z'(0)\le -\sqrt{4f(x_{0})},~~\forall t\in (0,t_{0}],
\end{equation}
which implies that $(z'(t))^{2}>4f(x_{0})$. Hence, using \eqref{GrindEQ__3_32_}, we immediately have
\begin{equation}\label{GrindEQ__3_57_}
f(V''(t))>f(x_{0}),~~\forall t\in (0,t_{0}].
\end{equation}
Using \eqref{GrindEQ__3_57_}, \eqref{GrindEQ__3_53_} and the continuity of $V''(t)$, we can see that
\begin{equation}\label{GrindEQ__3_58_}
V''(t)<x_{0},~~\forall t\in [0,t_{0}].
\end{equation}
In view of \eqref{GrindEQ__3_56_} and \eqref{GrindEQ__3_58_}, we have
\begin{equation}\nonumber
z''(t_{0})=\frac{1}{z(t_{0})}\left[\frac{V''(t_{0})}{2}-(z'(t_{0}))^{2}\right]
<\frac{1}{z(t_{0})}\left(\frac{x_{0}}{2}-\frac{x_{0}}{2}\right)=0,
\end{equation}
which contradicts $z''(t_{0})=0.$ Hence, we have \eqref{GrindEQ__3_54_} for any $t$ in the existence time. Using \eqref{GrindEQ__3_52_} and \eqref{GrindEQ__3_54_} and the fact $z(t)=\sqrt{V(t)}$, we can see that there exists $T<\infty$ such that
$\lim_{t\rightarrow T}{V(t)}=0$.
Noticing $\dot{H}_{c}^{1}\sim \dot{H}^{1}$, it follows from Lemma 3.3 that $\lim_{t\rightarrow T}{\left\|u(t)\right\|_{\dot{H}_{c}^{1}}}=\infty.$ This completes the proof.
\end{proof}

\end{document}